\documentclass[lettersize,journal]{IEEEtran}
\usepackage{defs}
\usepackage{amsmath,amssymb,amsthm,amsfonts}
\usepackage{microtype}
\usepackage[utf8]{inputenc}
\usepackage[T1]{fontenc}
\usepackage{xcolor}
\usepackage{cite}
\usepackage{amsmath,amssymb,amsfonts}
\usepackage{algorithmic}
\usepackage{graphicx}
\usepackage{textcomp}
\usepackage{orcidlink}
\usepackage{mathtools}
\usepackage{subcaption}
\usepackage{url}
\usepackage{todonotes}
\usepackage[linesnumbered,ruled,lined,algo2e]{algorithm2e} 
\usepackage{derivative}
\usepackage{bbm}
\usepackage{titlesec}

\usepackage{tabularray}   
\usepackage{adjustbox}   

\usepackage{mathtools}

\newtagform{vardiamondtag}{(}{)\,\,\,\(\vardiamond\)}

\titlespacing{\section}{0pt}{5pt plus 1pt minus 1pt}{2pt plus 1pt minus 1pt}
\titlespacing{\subsection}{0pt}{4pt plus 1pt minus 1pt}{1.5pt plus 1pt minus 1pt}

\usepackage[font=small,skip=2pt]{caption}
\definecolor{metablue}{HTML}{0064E0}
\definecolor{metafg}{HTML}{1C2B33}
\definecolor{metabg}{HTML}{F1F4F7}
\definecolor{metabgdeep}{HTML}{D9EFFF}
\definecolor{metagreen}{HTML}{EAFFE8}
\definecolor{metagreen}{HTML}{FCFFEE}
\definecolor{metared}{HTML}{FFEAE8}

\definecolor{darkbrown}{HTML}{4C0202}
\definecolor{brown}{HTML}{76210E}
\newcommand{\highlight}[1]{{\color{metablue} \textbf{#1}}}

\usepackage[framemethod=tikz,xcolor=true]{mdframed}

\newmdenv[backgroundcolor=metabgdeep, roundcorner=10pt, skipabove=4pt, linewidth=0pt, innertopmargin=4pt]{myframe}

\newmdenv[backgroundcolor=metabgdeep, roundcorner=2pt, skipabove=4pt, linewidth=0pt, innertopmargin=4pt]{myOCP}

\newmdenv[backgroundcolor=metared, roundcorner=10pt, skipabove=7pt, linewidth=0pt, innertopmargin=7pt]{myalgo}
\newmdenv[%
    leftmargin=0.5cm,
    backgroundcolor=yellow!10,%
    roundcorner=5pt,%
    tikzsetting={draw=red, line width=2.0pt}%
    ]{SpecialText}%

\newtheorem{theorem}{\textbf{Theorem}}[section]
\newtheorem{lemma}{\textbf{Lemma}}[section]

\newtheorem{assumption}{\textbf{Assumption}}[section]
\newtheorem{remark}{\textbf{Remark}}[section]
   
\renewcommand{\IEEEQED}{\IEEEQEDopen}
\makeatletter
\def\theorem@qed{\pushQED{\qed}\qedhere\popQED}
\makeatother

\def\BibTeX{{\rm B\kern-.05em{\sc i\kern-.025em b}\kern-.08em
    T\kern-.1667em\lower.7ex\hbox{E}\kern-.125emX}}
\usepackage{enumitem}
\setlist[enumerate, 1]{topsep=-0.5ex, parsep=0.5ex}
\setlist[itemize, 1]{topsep=-0.5ex, parsep=0.5ex}

\DeclareMathOperator{\PCont}{PC}
\DeclareMathOperator{\Cont}{C}

\DeclareMathOperator{\sbjto}{s.\hspace{-0.5mm}t.}

\DeclareMathOperator{\Ortho}{O}

\renewcommand{\le}{\ensuremath{\leqslant}}
\renewcommand{\ge}{\ensuremath{\geqslant}}
\renewcommand{\leq}{\le}

\let\emptyset\varnothing

\DeclareSymbolFont{extraup}{U}{zavm}{m}{n}
\DeclareMathSymbol{\varheart}{\mathalpha}{extraup}{86}
\DeclareMathSymbol{\vardiamond}{\mathalpha}{extraup}{87}
\DeclareMathSymbol{\varclub}{\mathalpha}{extraup}{84}
\DeclareMathSymbol{\vardspade}{\mathalpha}{extraup}{85}
 \DeclarePairedDelimiterX\lm[2]\lparen\rparen{#1;#2}
 \DeclarePairedDelimiterX\expecof[1]\lbrack\rbrack{#1}
\DeclarePairedDelimiterX\cexpecof[1]\lbrack\rbrack{\def\given{\: \delimsize\vert\:}#1}

\DeclarePairedDelimiterX\cprobof[1]\lparen\rparen{\def\given{\: \delimsize\vert\:}#1}

\makeatletter
\renewcommand*\env@matrix[1][*\c@MaxMatrixCols c]{%
  \hskip -\arraycolsep
  \let\@ifnextchar\new@ifnextchar
  \array{#1}}
\makeatother

\DeclarePairedDelimiterX\aset[1]\lbrace\rbrace{\def\suchthat{\; \delimsize\vert\;}#1}
 \DeclarePairedDelimiterX\lcrc[2]\lbrack\rbrack{#1,#2}
\DeclarePairedDelimiterX\lcro[2]\lbrack\lbrack{#1,#2}
 \DeclarePairedDelimiterX\lorc[2]\rbrack\rbrack{#1,#2}
\DeclarePairedDelimiterX\loro[2]\rbrack\lbrack{#1,#2}
\let\oldsqrt\sqrt
\def\sqrt{\mathpalette\DHLhksqrt}
\def\DHLhksqrt#1#2{%
\setbox0=\hbox{$#1\oldsqrt{#2\,}$}\dimen0=\ht0
\advance\dimen0-0.2\ht0
\setbox2=\hbox{\vrule height\ht0 depth -\dimen0}%
{\box0\lower0.4pt\box2}}

\makeatletter
\newcommand{\firstpageheadline}[1]{%
  \gdef\@firstpageheadline{#1}%
}
\gdef\@firstpageheadline{}

\let\old@logohead\@logohead
\def\@logohead{%
\bgroup
\footnotesize
\savebox{\@tempboxa}{\vtop {%
\hbox to \textwidth{\hfill \@firstpageheadline \hfill}%
\hbox to \textwidth{}%
\hbox to \textwidth{}%
\hbox to \textwidth{}%
\hbox to \textwidth{}%
}}%
\box\@tempboxa
\egroup
}
\makeatother

\begin{document}


\title{\(\algoname\): Optimal Transport-Driven Attack Detection in Cyber-Physical Systems
}
\author{Souvik Das \orcidlink{0000-0001-6918-6219} and Siddhartha Ganguly \orcidlink{0000-0003-2046-2061}
\thanks{S. Das is with the Department of Information Physics and Computing, The University of Tokyo, Japan. S. Ganguly is with the Daniel Guggenheim School of Aerospace Engineering, Georgia Institute of Technology, USA.  Emails: \{\textsf{souvikd@g.ecc.u-tokyo.ac.jp, sganguly41@gatech.edu}\}.}
}


\maketitle
\thispagestyle{empty}
\begin{abstract}
This letter presents an optimal-transport (OT)-driven, distributionally robust attack detection algorithm, \(\algoname\), for cyber–physical systems (CPS) modeled as partially observed linear stochastic systems. The underlying detection problem is formulated as a minmax optimization problem using 1-Wasserstein ambiguity sets constructed from observer residuals under both the nominal (attack-free) and attacked regimes, and show that the minmax detection problem can be reduced to a finite-dimensional linear program for computing the worst-case distribution (WCD). Off-support residuals are handled via a kernel-smoothed score function that drives a CUSUM procedure for sequential detection. We also establish a non-asymptotic tail bound on the false-positive error of the CUSUM statistic under the nominal (attack-free) condition, under mild assumptions. Numerical illustrations are provided to evaluate the robustness properties of \(\algoname\). 
\end{abstract}

\begin{IEEEkeywords}
Residual-based attack detection, cyber-physical systems, optimal transport 
\end{IEEEkeywords}

\section{Introduction, motivation, and background}\label{sec:intro}
Cyber-physical systems (CPSs), such as power grids, transportation networks, and manufacturing infrastructures, are large-scale interconnected systems that tightly integrate computation, communication, and control for continuous monitoring and operation. Due to their scale and limited supervision, they remain vulnerable to faults, anomalies, and adversarial attacks. Residual-driven fault detection is a well-known signal-processing framework \cite{ref:AnnRev:Con:fang2007fault,ref:GuoZhaWanFan:IEEE:SigPross:Residue} for monitoring and securing such critical infrastructures. In this approach, an observer or filter is applied to the measured outputs, and the resulting residual sequence is analyzed to detect deviations from the nominal operating regime toward anomalous behavior.

Classical detectors, including likelihood-ratio and CUSUM-type schemes (including robust versions) \cite{ref:huber1965robust,ref:AN-AT-AA-SD-2023,ref:BGCA:lu2022genetic,ref:cusum_new_1,ref:cusum_new_2,ref:cusum_new_3,ref:cusum_new_4}, are effective when the nominal and anomalous residual distributions are accurately specified. Their performance, however, can deteriorate under distributional and non-Gaussian disturbances \cite{ref:SD-PD-DC-23}.  In contrast to the majority of literature that studies attack detection problems under a control-theoretic framework, this letter adopts an optimal transport (OT) and distributionally robust viewpoint for attack detection in  CPSs.

OT has emerged as a principled tool for comparing empirical distributions and quantifying signal discrepancies under model mismatch, finite-sample uncertainty, and non-Gaussian variability \cite{ref:OT:in:SP:and:ML:Review,ref:OT:in:SP:and:ML:Wass:Fourier}. These features make it extremely suitable for detection problems, where the nominal and anomalous residual laws are rarely known exactly. Instead of assuming exact residual distributions, we construct Wasserstein ambiguity sets around empirical residual distributions obtained from nominal and attacked data. The detection problem is then posed as a min-max binary hypothesis test over these ambiguity sets. This viewpoint leads to a \emph{robust signal classification rule} whose worst-case residual distributions can be computed from data.

\textbf{Related works:} A few works employ distributionally robust optimization for attack detection. Specifically, \cite{ref:YF:DL:CS:WassDetection:FDI} uses OT-driven Wasserstein ambiguity sets for the unknown disturbance distribution and optimizes a reachability-based performance metric, to design a parity-space detector. \cite{ref:VR:NH:JR:TS:WassAnomaly} instead work with moment-based ambiguity sets and then compute ellipsoidal attack-reachable sets; this gives worst-case false alarm rate control under second-moment information, but the detector itself remains a conventional quadratic test, with no Wasserstein geometry between benign/attack distributions and no explicit min–max optimal test or sequential statistic. For sensor attacks, \cite{ref:DL:SM:LCSS:WassDetection} leverages Wasserstein metrics, but as a distance between a benchmark residual distribution and a sliding-window empirical distribution, recomputed online via a linear program; sequential detection and false positive bounds were not studied.
In contrast, our main contributions are:
\begin{itemize}[leftmargin=*,label = \(\circ\)]
\item \textbf{An OT-driven detection:} For a stochastic linear time-invariant (LTI) plant equipped with a steady-state observer, we devise a new data-driven residual-based detector, leveraging tools from OT, DRO, and hypothesis testing. We formulate the detection problem as a min-max hypothesis test between two ambiguity sets of probability measures. Drawing results from \cite{ref:WassHypTest} and Kantorovich–Rubinstein duality \cite{ref:CV:OTbook}, we show that the computation of worst-case distribution (WCD) reduces to a \emph{finite linear program (LP)}; informally
\begin{myOCP}
\highlight{Informal Theorem A:} Let \(Q_1,Q_2\) be the empirical measures of residuals on the finite set \(\widehat\metspaceX \Let \aset[]{s_{\ell}}_{\ell=1}^n\) (training window), and let
\begin{align}
\mathcal{P}_k \Let \aset[\big]{P  \suchthat \Wdist_1(P,Q_k)\le \eps_k}\quad \text{for }k\in\aset[]{1,2}.\nn
\end{align}
The minmax testing risk equals
\[
\eps^\star = 1 - V^{\star},
\]
where $V^\star$ is the optimum of a \emph{finite linear program}. 
\end{myOCP}
See Theorem \ref{prop:finite:LP:formulation} for further information.

\item \textbf{Kernel smoothing and sequential CUSUM-based test:} While the LP exactly solves the on-support detection problem, real-time deployment must evaluate the test on \emph{new residuals} \(z\) which may not be in the training set. To extend beyond the on-support training data, we present a \emph{kernel-smoothing} construction that maps the discrete WCDs to continuous densities and uses the log-density ratio as a continuous score, attractive for sequential operation.

\item \textbf{Non-asymptotic bounds on the false-positive error:} Beyond algorithmic tractability, we provide \emph{non-asymptotic guarantees} on the \emph{tail bound} for the false positive error; informally
\begin{myOCP}
\highlight{Informal Theorem B:} Run the CUSUM recursion \(\testsignal_0 = 0\), \(\testsignal_t=\max\aset[]{0,\testsignal_{t-1}+\increment_t}\). Under mild conditions on the increments, for any threshold \(h > 0\):
\begin{align}
\PP_1(\testsignal_t \ge h) \le 2 \exp\Bigl(-h^2/8 \sum_{i=1}^t \sigma_i^2\Bigr), \nn
\end{align}  
where \(\sigma_i>0\) known.
\end{myOCP}
See Theorem \ref{thrm:cusum:azuma-type:bounds:new} for further details.

\item \textbf{Non-asymptotic bounds on the post-attack detection guarantee:} We provide non-asymptotic post-attack guarantees on the probability of detection delay, and also compute the average detection delay under distributional uncertainties. An informal version of the result is summarized below:
\begin{myOCP}
    \highlight{Informal Theorem C: }
    Let \(\PP_{\nu}\) be the probability measure corresponding to an attack initiated at time \(\nu\). Under some mild conditions, there exist constants \(\theta>0\) and \(c_{\theta}>0\) such that for every \(d\in\N\):
    \begin{align*}
    \PP_{\nu}\bigl(\tau_h-\nu\ge d\,\big|\,\tau_h\ge\nu\bigr)\le
    \min\left\{1,\,\exp\bigl(\theta h-dc_{\theta}\bigr)
    \right\},
    \end{align*}
    Moreover, the conditional average detection delay satisfies
    \begin{align*}
    \EE_{\nu}\bigl[\tau_h-\nu\,\big|\,\tau_h\ge\nu\bigr]\le\left\lceil
    \frac{\theta h}{c_{\theta}}\right\rceil+\frac{1}{\exp(c_{\theta})-1}.
    \end{align*}
\end{myOCP}
We refer the readers to Theorem \ref{thm:post:attack:delay} for further details.

\end{itemize}
\vspace{1mm}
\noindent The \embf{key features} of this work are as follows: \textbf{(a)} Our detection scheme \embf{does not} require full distributional knowledge of the process noise, only that it lies in an ambiguity set specified by 1-Wasserstein distance, ensuring distributional robustness.  \textbf{(b)} Our detection scheme is \emph{adversary-agnostic}, with no assumptions on the attack model nor on the policy employed by the adversary. \textbf{(c)} We assume the adversary has access to system parameters, nominal control policies, and sensor measurements. They can learn steady-state behavior, adapt to system changes, collude, and launch attacks accordingly to disrupt attack-free performance.

\section{Preliminaries and problem description}\label{sec:prob:form}

Let \(d \in \N\) be a natural number and \(\Omega \subset \Rbb^d\). We denote the standard probability simplex by \(\simplex{d} \Let \aset{\zeta \in \Rbb^d_{\succeq 0} \mid \sum_{i=1}^d \zeta_i = 1}\). Fix natural numbers \(n,m \ge 1\), and let \(\sourcevec \Let (\sourcevec_1,\ldots,\sourcevec_m)^{\top} \in \simplex{m}\) and \(\tarvec\Let (\tarvec_1,\ldots,\tarvec_n)^{\top} \in \simplex{n}\). On \(\metspaceX\) define the discrete measures \(\measX \Let \sum_{i=1}^m \sourcevec_i \delta_{x_i}\) and \(\measY \Let \sum_{j=1}^{n} \tarvec_j \delta_{y_j}\), where \(x_i,y_j \in \metspaceX\) are the support points. Define the set of couplings (which is a convex polytope) by 
\begin{align}
\probmeas(\sourcevec,\tarvec) \Let \bigg\{\coupmat  \in \Rbb^{m \times n}_{+} \bigg \lvert \sum_{j=1}^n\coupmat_{ij} = \sourcevec_i, \sum_{i=1}^m \coupmat_{ij} = \tarvec_j \bigg\}.\nn
\end{align}
The Kantorovich-Rubinstein distance \cite[Chapter 6]{ref:CV:OTbook} takes the form (for \(p=1\))
\begin{align}\label{eq:disc_W_distance}
\Wdist_1(\measX, \measY) \Let  \inf_{\coupmat \in \probmeas(\sourcevec,\tarvec)} \sum_{i=1}^{m}\sum_{j=1}^n \coupmat_{ij} \costX(x_i,y_j).
\end{align}
In the sequel, will construct our ambiguity sets employing \eqref{eq:disc_W_distance} with the ground cost \((z,z') \mapsto \costX(z,z') \Let \norm{z-z'}_2\).

\subsection{Stochastic linear systems}
\label{subsec:stochastic linear systems}
Let \(\dimst,\dimcon,\dimdist\) and \(\dimout\) be natural numbers. Consider a time-invariant discrete-time control system
\begin{align}
    \label{eq:system}
    \st_{t+1} = A \st_t + B \cont_t + E \dist_t,\,\,
    y_t = C \st_t + F \dist_t,
\end{align}
with \(\st_0 \Let \xz\) given and \(t \in \Nz\), along with the following data: \(\st_{t} \in \Rbb^{\dimst}\), \(\cont_t \in \Rbb^{\dimcon}\), \(\dist_t \in \Rbb^{\dimdist}\), and \(y_t \in \Rbb^{\dimout}\) are the vectors representing the states, control inputs, uncertainties, and the output at time \(t\), with \(A \in \Rbb^{\dimst \times \dimst}\), \(B \in \Rbb^{\dimst \times \dimcon}\), \(E \in \Rbb^{\dimst \times \dimdist}\), \(C \in \Rbb^{\dimout \times \dimst}\) and \(F \in \Rbb^{\dimout \times \dimdist}\).  The controller computes the state estimate, which is a function of the observation \((y_t)_{t \in \Nz}\), to estimate and monitor the system, and employ control actions that are a function of the state estimate. Let \(L \in \Rbb^{\dimst \times \dimout}\) be a steady-state gain, we consider the estimator:
\begin{align}
    \label{eq:observer}
    \obst_{t+1} &= A \obst_t + B \cont_t + L(y_t - C \obst_t),\,
    \residual_t  \Let y_t - C \obst_t,
\end{align}
with \(\obst_0 \Let \widehat{\st}\). Moreover, \(L\in \Rbb^{\dimst \times \dimout}\) is the steady-state gain picked such that \((A-LC)\) is Schur stable. 
\begin{assumption}
    \label{assum:standing assump}
    \((A, B, C)\) is both stabilizable and detectable. 
\end{assumption}
We model the influence of the adversary by the stochastic process \(\advdis\) that enters the system through the output channel: for \(t \in \Nz\), we have the recursion
\begin{align}
\label{eq:adv sys}
    \adst_{t+1} = A \adst_t + B \adcont_t + E \dist_t,\,\adobs_t = C \adst_t + F \dist_t + \advdis_t,
\end{align}
where \(\adst_0 \Let \xz\) is given. Here, the controller's action \(\adcont_t\)  at each time is based on the available corrupted output \((\adobs_t)_{t \in \Nz}\). The corresponding state estimator is given by the recursion
\begin{align}
    \label{eq:observer in adv}
    &\adobst_{t+1} = A \adobst_t + B \adcont_t + L(\adobs_t - C \adobst_t) \text{ with }\adobst_0 \Let \widehat{\st}^{\mathrm{a}}, \nn\\
    &\adres_t  \Let \adobs_t - C \adobst_t = C (\adst_t - \adobst_t)  + F \dist_t + \advdis_t,
\end{align}
where \(L \) is chosen similarly.
Under the above setting, we seek to infer whether the system's uncertainties are inherent, represented by \(\dist\), or they have been influenced by an external adversarial disturbance denoted by \(\advdis\). Informally, we pose this problem as \embf{a composite hypothesis testing problem}: given a sample \(\sample\) of the residual obtained from \eqref{eq:observer} and \eqref{eq:observer in adv}, we set: 
\begin{myOCP}
\begin{align*}
    \text{(H0): }\sample \sim P_1 \in \mathcal{P}_1 \text{ and }
    \text{(H1): }\sample \sim  P_2 \in \mathcal{P}_2,
\end{align*}
\end{myOCP}
where \(\mathcal{P}_1\) and \(\mathcal{P}_2\) are the sets of probability measures defined over \(\Omega\); see \eqref{eq:wass:ambiguity:set} for a definition.

\section{\(\algoname:\) algorithm and properties}\label{sec:main_result}
We are ready to formulate the detection problem. Recall that the underlying distributions of \(\dist\) and \(\advdis\) are not available. However, we have access to \(n_1 \in \N\) and \(n_2 \in \N\) number of residual samples as the \emph{training data}. Let \(\metspaceX \Let \Rbb^{\dimout}\). Consider the data streams generated from \eqref{eq:observer} and \eqref{eq:observer in adv}, respectively: \(\metspaceX_1 \Let \aset[\big]{\resdis_i \in \metspaceX}_{i=1}^{n_1}\) and \(\metspaceX_2 \Let \aset[\big]{\resadv_j \in \metspaceX}_{j=1}^{n_2},\)
and let \(\widehat{\metspaceX} \Let \metspaceX_1 \sqcup \metspaceX_2 = \aset[]{s_{\ell}}_{\ell=1}^{n}\) with \(n\Let n_1+n_2\). Note that both \(\metspaceX_1\) and \(\metspaceX_2\) are collected by online implementation of \eqref{eq:system}-\eqref{eq:observer} and \eqref{eq:adv sys}-\eqref{eq:observer in adv}. Let \(\alpha \in \simplex{n_1},\beta \in \simplex{n_2}\); we construct the empirical measures 
\begin{align}\label{eq:empirical measures}
    Q_1 \Let \sum_{i=1}^{n_1} \alpha_i \delta_{\resdis_i} \quad \text{and} \quad Q_2 \Let \sum_{j=1}^{n_2} \beta_j \delta_{\resadv_j},
\end{align}
with \(\alpha_i = \frac{1}{n_1}\) and \(\beta_j = \frac{1}{n_2}\) for all \((i,j) \in \aset[]{1,\ldots,n_1} \times \aset[]{1,\ldots,n_2}\). 
We also define the index sets \(\indexset_1 \Let \aset[]{1,\ldots,n_1}\) and \(\indexset_2 \Let \aset[]{n_1+1,\ldots,n}\) with \(\widehat{\indexset} \Let \aset{1, \ldots,n}\). In shorthand notation, for \(\ell=1,\ldots,n_1\) we have \(s_{\ell} = \resdis_{\ell}\) and for \(\ell=n_1+1, \ldots,n\), we have \(s_{\ell} = \resadv_{\ell-n_1}\). 
We expand \eqref{eq:empirical measures} in the unified index \(\ell\), and towards this end, we define for \(\ell \in \aset[]{1,\ldots,n}\), the empirical measures \((Q_1)_{\ell} \Let \alpha_{i}\) for \(\ell \in \indexset_1\), and \((Q_2)_{\ell} \Let \beta_{\ell-n_1}\) for \(\ell \in \indexset_2\) under the nominal and attacked case, respectively. 
Note that for \(k=1,2\), \((Q_k)_{\ell} = 0\) for \(\ell \in \widehat{\indexset}\setminus \indexset_k \) because their corresponding supports are disjoint.

For some \(\eps_1,\eps_2>0\), we consider two ambiguity sets centred around \(Q_k\) for \(k =1,2\), given by
\begin{align}\label{eq:wass:ambiguity:set}
\hspace{-2mm}\polish_k \Let \aset[\big]{P_k \in \polish(\metspaceX) \suchthat \Wdist_1(P_k,Q_k) \le \eps_k} \text{ for }k \in \{1,2\},
\end{align}
where \(\Wdist_1(\cdot,\cdot)\) is the \(1\)-Wasserstein distance defined in \eqref{eq:disc_W_distance}. 

\begin{assumption}\label{assum:blanket}
Throughout this paper, we assume that the ambiguity radii \((\eps_1, \eps_2)\) are selected so that the ambiguity sets \eqref{eq:wass:ambiguity:set} are disjoint.
\end{assumption}
This condition is required to exclude the degenerate case in which the same probability law is admissible under both \((\text{H0})\) and \((\text{H1})\). Mathematically, it means that the radii should satisfy \(\eps_1+\eps_2 < \Wdist_1(Q_1,Q_2).\)\footnote{We thank one of the anonymous reviewers for this pointer.}

\subsection{The minmax test} We formulate a minmax test based on the ideas advanced in \cite{ref:WassHypTest,ref:WassHypTest:NIPS:}. Recall that, given hypotheses (H0) and (H1), a \emph{randomized test} is any Borel measurable map \(\test:\metspaceX \lra \lcrc{0}{1}\), which for any observation \(s_{\ell}\in \metspaceX\), accepts the hypothesis (H0) with probability equal to \(\test(s_{\ell})\) and (H1) with probability equal to \(1 - \test(s_{\ell})\).\footnote{Throughout, we abuse notation by referring to random vectors and their realizations using the same symbols.} For a pair \((P_1,P_2) \in \polish_1(\metspaceX) \times \polish_2(\metspaceX)\), define 
\begin{align}\label{eq:risk}
    \risk \bigl(\test;P_1,P_2\bigr) \Let \EE_{1}\expecof[]{1-\test(s_{\ell})} + \EE_{2}\expecof[]{\test(s_{\ell})},
\end{align}
where (expectations) \(\EE_1\) and \(\EE_2\) are defined as per \(\PP_1\) and \(\PP_2\), respectively. Then, the robust testing problem is 
\begin{equation}
	\label{eq:RobTest}
	\begin{aligned}
		& \inf_{\test} \sup_{P_1 \in \polish_1, P_2 \in \polish_2} &&  \risk(\test;P_1,P_2).
	\end{aligned}
\end{equation}
Note that the risk function \eqref{eq:risk} accounts for the trade-off between likelihood of incurring \emph{Type-}\(1\) and \emph{Type-}\(2\) errors by the test \(\test(\cdot)\) under \(P_1\) and \(P_2\), respectively, thereby reducing the problem into a \embf{signal classification problem.}

\subsection{LP formulation}
In \cite{ref:WassHypTest,ref:WassHypTest:NIPS:} it was shown that in the setting of \emph{simple hypothesis testing}, the optimal \emph{on sample} test takes a similar form as the likelihood ratio test. More precisely: 

\begin{myOCP}
\begin{lemma}
\label{lemm:Risk:Expression}
For any fixed \((P_1,P_2) \in \polish_1 \times \polish_2\),
consider the inner minimization problem \(\optrisk(P_1,P_2) \Let \inf_{\test:\metspaceX \lra \lcrc{0}{1}} \risk(\test;P_1,P_2)\) corresponding to the minmax problem \eqref{eq:RobTest}. Then the Neyman-Pearson-like randomized rule: \(\test^{\star}(s_{\ell}) = 1\) if \(\tfrac{\odif{P_2}}{\odif{(P_1+P_2)}}(s_{\ell}) > \tfrac{1}{2}\), \(\test^{\star}(s_{\ell}) = 0\) if \(\tfrac{\odif{P_1}}{\odif{(P_1+P_2)}}(s_{\ell}) < \tfrac{1}{2}\), and \(\test^{\star}(s_{\ell}) = p \in \lcrc{0}{1}\) otherwise, is optimal for \eqref{eq:RobTest} with the risk \(\optrisk(P_1, P_2) = 1 - \mathsf{TV}(P_1,P_2)\).\footnote{Where \(\mathsf{TV}\) is the total variation distance \cite[Chapter 1]{ref:CV:OTbook}.} \hfill \(\vardiamond\)
\end{lemma}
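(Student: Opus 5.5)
The plan is to reduce the inner minimization to a pointwise minimization under a common dominating measure. Fix \((P_1,P_2)\in\polish_1\times\polish_2\) and set \(\mu \Let P_1+P_2\), a finite measure with \(P_1,P_2\ll\mu\). By Radon--Nikodym there are measurable densities \(p_k \Let \odif{P_k}/\odif{\mu}\), \(k=1,2\), and since \(P_1+P_2=\mu\) we may choose versions with \(p_1,p_2\ge 0\) and \(p_1+p_2=1\) everywhere. For any Borel \(\test:\metspaceX\lra\lcrc{0}{1}\), definition \eqref{eq:risk} gives
\[
\risk(\test;P_1,P_2)=\int_{\metspaceX}\bigl[(1-\test)p_1+\test\,p_2\bigr]\,\odif{\mu}.
\]
The integrand at each point \(z\) is affine in \(\test(z)\in\lcrc{0}{1}\), so it is at least \(\min\{p_1(z),p_2(z)\}\). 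Hence \(\risk(\test;P_1,P_2)\ge\int\min\{p_1,p_2\}\,\odif{\mu}\) for every test.

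Next, I would show that \(\test^{\star}\) attains this bound. We need the pointwise minimizer of \((1-\test)p_1+\test p_2\). Using the paper's convention that \(\test\) is the probability of accepting (H0), the minimizer puts \(\test^\star=1\) where \(p_1>p_2\) and \(\test^\star=0\) where \(p_2>p_1\). With \(p_1+p_2=1\), the condition \(p_2>p_1\) is the same as \(p_2>1/2\), and \(p_1>p_2\) is the same as \(p_1>1/2\). The stated rule is the same threshold rule written through the Radon--Nikodym derivatives with respect to \(P_1+P_2\). On the tie set \(\{p_1=p_2=1/2\}\) the integrand equals \(1/2\) for every \(p\in\lcrc{0}{1}\), so any randomization is optimal there. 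The function \(\test^\star\) is Borel because the \(p_k\) are measurable, so it is admissible. Therefore \(\optrisk(P_1,P_2)=\int\min\{p_1,p_2\}\,\odif{\mu}\).

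Finally, I would identify this value. From \(\min\{a,b\}=\tfrac12(a+b-|a-b|)\) and \(\int(p_1+p_2)\,\odif{\mu}=2\), we get \(\optrisk=1-\tfrac12\int|p_1-p_2|\,\odif{\mu}\). The total variation distance \(\mathsf{TV}(P_1,P_2)=\sup_A|P_1(A)-P_2(A)|\) equals \(\tfrac12\int|p_1-p_2|\,\odif{\mu}\), with the supremum attained at \(A=\{p_1>p_2\}\). This gives \(\optrisk(P_1,P_2)=1-\mathsf{TV}(P_1,P_2)\).

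The main obstacle is the labeling convention rather than the analysis. The paper says \(\test\) accepts (H0) with probability \(\test\), yet the risk \eqref{eq:risk} charges \(\EE_1[1-\test]\), and the stated rule sets \(\test^\star=1\) when \(p_2>1/2\). I would fix the convention implied by \eqref{eq:risk} and check that the stated thresholds match the pointwise minimizer. If they come out swapped, this is only a relabeling of \(\test\leftrightarrow 1-\test\) and does not change the optimal value \(1-\mathsf{TV}\). Two further points to state explicitly are the existence of the common dominating measure and the choice of density versions summing to one, which justifies writing each condition as a comparison with \(1/2\).
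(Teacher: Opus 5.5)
Your argument is correct and is the standard proof of this fact. You write the risk \eqref{eq:risk} as $\int[(1-\varphi)p_1+\varphi p_2]\,\mathrm{d}(P_1+P_2)$ with $p_k=\mathrm{d}P_k/\mathrm{d}(P_1+P_2)$ and $p_1+p_2=1$, then minimize pointwise. The paper gives no proof of its own and imports the lemma from the cited Wasserstein robust-testing works. Your derivation shows that the infimum of \eqref{eq:risk} over tests is $\int\min\{p_1,p_2\}\,\mathrm{d}(P_1+P_2)=1-\mathsf{TV}(P_1,P_2)$, with $\mathsf{TV}=\sup_A|P_1(A)-P_2(A)|$, the normalization the paper also uses later. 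This value is attained by $\varphi^\star=1$ on $\{p_1>p_2\}$, $\varphi^\star=0$ on $\{p_2>p_1\}$, and any value on ties.

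What does not hold is your assertion that the stated rule is ``the same threshold rule,'' and your ``only a relabeling'' fallback does not fix it.

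\begin{itemize}
\item Because $p_1+p_2=1$, the two stated branches $\{p_2>1/2\}$ (where $\varphi^\star=1$) and $\{p_1<1/2\}$ (where $\varphi^\star=0$) are the same set $\{p_2>p_1\}$. The rule as written therefore prescribes both values there.
\item Its ``otherwise'' branch leaves $\varphi^\star$ arbitrary on all of $\{p_1>p_2\}$. Your own computation shows that only $\varphi^\star=1$ is optimal there.
\item Read literally, the first branch gives the test $\mathbf{1}\{p_2>p_1\}$. Under \eqref{eq:risk} its risk is $\int\max\{p_1,p_2\}\,\mathrm{d}(P_1+P_2)=1+\mathsf{TV}(P_1,P_2)$, so it is the worst test, not an optimal one.
\end{itemize}

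Swapping $\varphi\leftrightarrow 1-\varphi$ preserves the optimal value only if you also swap the risk to $\mathbb{E}_1[\varphi]+\mathbb{E}_2[1-\varphi]$. The risk \eqref{eq:risk} is fixed, since it is what enters \eqref{eq:RobTest}, so the orientation of the rule is exactly what the optimality claim asserts.

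You should therefore state explicitly that your derivation proves the lemma with the first branch corrected to $\mathrm{d}P_1/\mathrm{d}(P_1+P_2)>1/2$; this is evidently a $P_1\leftrightarrow P_2$ slip in the statement. The value $1-\mathsf{TV}(P_1,P_2)$ is unaffected, and it is the only part used in the proof of Theorem~\ref{prop:finite:LP:formulation}.
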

\end{myOCP}

To find the worst-case distributions (WCD), we look at the inner supremum problem in \eqref{eq:RobTest}, i.e., 
\begin{align}\label{eq:RobTest:SupProb}
\sup_{P_1 \in \mathcal{P}_1, P_2 \in \mathcal{P}_2} 1 - \mathsf{TV}(P_1,P_2).
\end{align}
The variational problem \eqref{eq:RobTest:SupProb} is infinite-dimensional and is not computationally viable in general. However, due to the nature of the \(\Wdist_1\)-ambiguity set and the discreteness of the empirical measures, we can optimally solve \eqref{eq:RobTest:SupProb} via a finite-dimensional tractable convex program, which is the focus of the next result. 

Note that \eqref{eq:RobTest:SupProb}, if it admits a solution, ensures that the overlapping distributions \(P_1\) and \(P_2\) have to be close in the total variation sense (\(TV\)-sense), thereby constraining the adversarial policy. In other words, an adversary cannot arbitrarily degrade the performance of the CPS, or it risks detection.

\begin{myOCP}
\begin{theorem}\label{prop:finite:LP:formulation}
Recall that the notations established in \S\ref{sec:prob:form}. Corresponding to the ground cost \(\costX(\cdot,\cdot)\), we define the pairwise cost matrix \(D\in\Rbb_{+}^{n\times n}\) with \(D_{\ell m}\Let \costX(s_{\ell},s_m)\). Consider the robust testing problem 
\begin{align}
\eps{^\star} \Let \inf_{\test}\ \sup_{P_1\in \polish_1,\ P_2\in \polish_2} \EE_{1}\expecof[]{1-\test(s_{\ell})} + \EE_{2}\expecof[]{\test(s_{\ell})},
\end{align}
where the ambiguity sets are defined in \eqref{eq:wass:ambiguity:set}. 
\begin{enumerate}[label=\textup{(\alph*)}, leftmargin=*, widest=b, align=left]

\item Then \(\eps^{\star} = 1 - V^{\star}\). Where \(V^{\star}\) is the optimal value of the following \emph{finite linear program} with decision variables \(\widehat{p}\Let (p_1,p_2,\Gamma_1,\Gamma_2,t)\): \(p_1,p_2\in\Rbb^{n}_{+}\) with \(\sum_{\ell}p_{k,\ell}=1\), \(\Gamma_1,\Gamma_2\in\Rbb_{+}^{n \times n}\) (optimal transport couplings from empirical measures \(Q_k\) to \(p_k\)), and \(t\in\Rbb^{n}_{+}\): 
\begin{equation}\label{eq:LFD:LinProg} 
\begin{aligned}
& \hspace{-5mm}\max_{\widehat{p}}	&& \sum_{\ell=1}^{n} t_\ell \\
& \hspace{-5mm} \sbjto		&& \hspace{-6mm} \begin{cases}
\sum_{\ell=1}^{n}\sum_{m=1}^{n} \Gamma_{1,\ell m}\, D_{\ell m}\ \le\ \eps_1,\\ \sum_{\ell=1}^{n}\sum_{m=1}^{n} \Gamma_{2,\ell m}\, D_{\ell m}\ \le\ \eps_2, \\
\sum_{m=1}^{n} \Gamma_{k,\ell m} = (Q_k)_{\ell},\,
\sum_{\ell=1}^{n} \Gamma_{k,\ell m} = p_{k,m},\\
0 \le t_{\ell} \le p_{1,\ell},\,\, 0 \le t_\ell \le p_{2,\ell},\,p_{k,\ell}\ge 0\\
\sum_{\ell=1}^{n} p_{k,\ell}=1,\,
\Gamma_{k,\ell m}\ge 0 \text{ for all } k\in \aset[]{1,2}.
\end{cases}
\end{aligned}
\end{equation}
\item The LP \eqref{eq:LFD:LinProg} is well-posed and admits a solution. \hfill \(\vardiamond\)
\end{enumerate}
\end{theorem}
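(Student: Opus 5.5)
We would follow the strategy of \cite{ref:WassHypTest}: first exchange the infimum and supremum in the robust testing problem, and then reduce the resulting variational problem to one over measures supported on the finite set \(\widehat{\metspaceX}\).

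\emph{Step 1 (saddle point).} The risk \(\risk(\test;P_1,P_2)\) is linear in \(\test\) and linear in \((P_1,P_2)\). The set of tests \(\test:\metspaceX\to\lcrc{0}{1}\) is convex, and the ambiguity sets \(\polish_1,\polish_2\) are convex because \(\Wdist_1\) is jointly convex. They are also weakly compact, since \(\Wdist_1\)-balls around finitely supported measures are tight. A minimax theorem (Sion's, in its version for linear–linear functionals) then gives
\[
\eps^\star=\sup_{P_1,P_2}\inf_{\test}\risk(\test;P_1,P_2).
\]
By Lemma~\ref{lemm:Risk:Expression}, the inner infimum equals \(1-\mathsf{TV}(P_1,P_2)\). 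Hence \(\eps^\star\) is the value of the supremum problem \eqref{eq:RobTest:SupProb}, and it remains to show that this value equals \(V^\star\). Here we use the normalization \(1-\mathsf{TV}(P_1,P_2)=\int \min\{\odif{P_1},\odif{P_2}\}\), i.e.\ the overlap mass.

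\emph{Step 2 (reduction to the support \(\widehat{\metspaceX}\)).} This is the main obstacle. Take any feasible \((P_1,P_2)\) with optimal couplings \(\pi_k\in\Pi(Q_k,P_k)\). Each \(\pi_k\) moves the mass of every atom \(s_\ell\) of \(Q_k\) to some distribution \(\pi_k(\cdot\mid s_\ell)\). We would construct modified measures \(\tilde P_1,\tilde P_2\) supported on \(\widehat{\metspaceX}\) that keep transport cost at most \(\eps_k\) and do not decrease the overlap.

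The construction goes as follows. Split \(P_k=\mu+\nu_k\), where \(\mu=\min\{P_1,P_2\}\) is the common part. Mass of \(\nu_k\) that is not shared can be sent back to its origin atom, which only reduces cost. For the shared mass \(\mu\), consider a point \(z\) receiving mass from \(s_\ell\) (under \(Q_1\)) and from \(s_m\) (under \(Q_2\)). Replace \(z\) by \(s_\ell\) or \(s_m\). By the triangle inequality \(\norm{s_\ell-s_m}\le\norm{s_\ell-z}+\norm{z-s_m}\), so the total cost of the two transports does not increase.

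The costs, however, must be controlled separately, one for each ball. We would handle this by choosing the atom \(s_\ell\) or \(s_m\), or a convex mixture of the two, at the same parameter as \(z\) on the path between them. The triangle-inequality slack then keeps each cost within its budget, using the linearity of the constraints. This is the delicate point, and it is where the \(p=1\) structure (Kantorovich–Rubinstein) is essential. If this per-ball splitting fails, the fallback is to derive the reduction from the dual form of \eqref{eq:RobTest:SupProb}: Wasserstein DRO duality shows that worst-case measures for linear functionals can be taken on the data points.

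\emph{Step 3 (LP encoding).} For measures \(p_1,p_2\) on \(\widehat{\metspaceX}\), the constraint \(\Wdist_1(p_k,Q_k)\le\eps_k\) holds if and only if there exists a coupling \(\Gamma_k\ge0\) with marginals \((Q_k)_\ell\) and \(p_{k,m}\) and cost \(\sum_{\ell,m}\Gamma_{k,\ell m}D_{\ell m}\le\eps_k\); this is \eqref{eq:disc_W_distance}. The overlap \(\sum_\ell\min\{p_{1,\ell},p_{2,\ell}\}\) equals \(\max\sum_\ell t_\ell\) subject to \(0\le t_\ell\le p_{k,\ell}\). Combining Steps 1–3 gives \(\eps^\star=1-V^\star\), which is part (a).

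\emph{Part (b).} The feasible set of \eqref{eq:LFD:LinProg} is nonempty: take \(p_k=Q_k\), \(\Gamma_k=\mathrm{diag}(Q_k)\), and \(t=0\). The feasible set is also bounded, since all variables lie in \([0,1]\). It is therefore a nonempty polytope, and the linear objective attains its maximum there.
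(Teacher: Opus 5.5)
\textbf{Your last step does not follow, and the identity it asserts is false.} Your own chain gives a different result. Step~1 yields \(\varepsilon^\star=\sup_{P_1\in\mathcal P_1,P_2\in\mathcal P_2}\bigl(1-\mathsf{TV}(P_1,P_2)\bigr)\), the largest achievable overlap mass. Step~2 lets you take this supremum over measures on the atoms \(s_1,\dots,s_n\). Step~3 identifies the supremal overlap \(\sum_\ell\min\{p_{1,\ell},p_{2,\ell}\}\) with \(V^\star\). So you have proved \(\varepsilon^\star=V^\star\), which is exactly what you announced after Step~1 (``it remains to show that this value equals \(V^\star\)''). The quantity \(1-V^\star=\inf_{\mathcal P_1\times\mathcal P_2}\mathsf{TV}\) is something else.

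A small example shows that \(\varepsilon^\star=1-V^\star\) is wrong. On \(\mathbb R\) take \(Q_1=\delta_0\), \(Q_2=\delta_{10}\), \(\varepsilon_1=\varepsilon_2=1\). In the LP each \(p_k\) can move at most mass \(0.1\) onto the other atom, so \(V^\star=0.2\). The ramp test \(\phi(z)=\min\{1,\max\{0,1-z/10\}\}\) satisfies \(1-\phi(z)\le|z|/10\) and \(\phi(z)\le|10-z|/10\). Its worst-case risk is therefore at most \(W_1(P_1,\delta_0)/10+W_1(P_2,\delta_{10})/10\le 0.2\). Hence \(\varepsilon^\star\le 0.2<0.8=1-V^\star\), and in fact \(\varepsilon^\star=0.2=V^\star\). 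The paper's proof makes the same slip: it writes \(\varepsilon^\star=1-\inf\mathsf{TV}\) and \(\mathsf{TV}(p_1,p_2)=1-\sum_\ell\min\{p_{1,\ell},p_{2,\ell}\}\), which combine to \(V^\star\), yet it concludes \(1-V^\star\). Your write-up should conclude \(\varepsilon^\star=V^\star\) and flag the discrepancy rather than reproduce it.

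\textbf{Route compared with the paper.} Otherwise your skeleton is the paper's: Lemma~\ref{lemm:Risk:Expression}, reduction to atoms, coupling encoding, the \(t\)-variables, and the same feasibility/compactness argument for (b). The difference is that you prove the two steps the paper only cites. The paper takes the inf--sup exchange for granted, deferring to \cite[Theorem~1]{ref:WassHypTest} in a remark, and invokes \cite[Lemmas~6 and~8]{ref:WassHypTest} for the support reduction.

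Your Step~2 works without the fallback once the parameter is made explicit. Take shared mass \(w\) at \(z\), paired with origins \(s_\ell\) under \(\pi_1\) and \(s_m\) under \(\pi_2\). Let \(a=\|s_\ell-z\|\), \(b=\|z-s_m\|\), \(\theta=a/(a+b)\), and put mass \(\theta w\) at \(s_m\) and \((1-\theta)w\) at \(s_\ell\) in both measures. By the triangle inequality, the ball-1 cost is \(\theta w\|s_\ell-s_m\|\le wa\) and the ball-2 cost is \((1-\theta)w\|s_\ell-s_m\|\le wb\), while the overlap is preserved; \(z\) need not lie on the segment. If \(z\) receives mass from several atoms, pair the origins through the product of the two conditional origin kernels at \(z\).

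In Step~1, Sion's theorem needs \(P\mapsto\mathbb E_{P_1}[1-\phi]+\mathbb E_{P_2}[\phi]\) to be weakly upper semicontinuous, which fails for merely Borel \(\phi\). The fix is to run Sion over continuous tests, which can only increase the inf--sup. Then note that for fixed \((P_1,P_2)\), continuous tests give the same infimum as Borel tests, by density in \(L^1(P_1+P_2)\). Weak compactness of the balls also requires weak closedness (lower semicontinuity of \(W_1\)), not only tightness.

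What your route buys is a self-contained argument; the paper's is shorter but rests entirely on the citation.
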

\end{myOCP}
\begin{proof}
For any fixed \((P_1,P_2)\), via Lemma \ref{lemm:Risk:Expression} we have 
\begin{align}
    \inf_{\test} \EE_{1}\expecof[]{1-\test(s_{\ell})} + \EE_{2}\expecof[]{\test(s_{\ell})} = 1-\mathsf{TV}(P_1,P_2), \nn
\end{align}
which implies \(\varepsilon^\star=1-\inf_{P_1\in\mathcal P_1,P_2\in\mathcal P_2}\mathsf{TV}(P_1,P_2).\)

Because \(Q_k\) is discrete on \(\widehat{\metspaceX}\), the balls \(\polish_k\) admits the standard OT-coupling representation. Indeed, since \(p_k=\sum_{m=1}^{n} p_{k,m}\,\delta_{s_m}\) on \(\widehat{\metspaceX}\), using the duality arguments and properties of \(\Wdist_1(\cdot,\cdot)\) via \cite[Lemma 6 and Lemma 8]{ref:WassHypTest}, there exists a coupling matrix \(\Gamma_k\in \Rbb_{+}^{n\times n}\) between \(Q_k\) and \(p_k\) with row sums \((Q_k)_\ell\), column sums \(p_{k,m}\), and cost bounded by \(\eps_k\), i.e., \(\sum_{m=1}^n \Gamma_{k,\ell m}=(Q_k)_\ell,\) \(\sum_{\ell=1}^n \Gamma_{k,\ell m}=p_{k,m}\) and \(\sum_{\ell,m}\Gamma_{k,\ell m}\,D_{\ell m}\le \eps_k,\) with $D_{\ell m}\Let \costX(s_{\ell},s_m)$. Note that the finite-reduction at at atoms \(s_\ell\) again follows from the Kantorovich-Rubinstein-type Duality \cite[Lemma 6]{ref:WassHypTest}.

Since both the distributions are discrete on the same finite support \(\widehat{\metspaceX}\), we have: \(\mathsf{TV}(p_1,p_2)=1-\sum_{\ell=1}^n \min\aset[]{p_{1,\ell},p_{2,\ell}}.\) Introducing variables \(t\Let (t_1,\ldots,t_n) \in\Rbb^n_{+}\) with \(0\le t_\ell\le p_{1,\ell}\), \(0\le t_\ell\le p_{2,\ell}\) with \(\ell=1,\dots,n\), we see that 
\begin{align}\label{eq:proof:LP:cost}
  &\max_{t\in \Rbb^n_{+}} \aset[\bigg]{\sum_{\ell=1}^n t_{\ell} \suchthat 0\le t_\ell\le p_{1,\ell},\, 0\le t_\ell\le p_{2,\ell}}  
\end{align}
is equal to \(\sum_{\ell=1}^n \min\aset[]{p_{1,\ell},p_{2,\ell}}\). The preceding arguments along with \eqref{eq:proof:LP:cost} yields exactly the finite LP in the proposition, hence \(\eps^{\star}=1-V^{\star}\).

We show the well-posedness and existence of solutions to \eqref{eq:LFD:LinProg}. Feasibility is immediate: choose \(
p_k = Q_k\), \(\Gamma_{k,\ell m} \Let (Q_k)_{\ell}\indic{m=\ell}\), and \(t_{\ell}\Let 0\).\footnote{Here \(\indic{A}(\cdot)\) is the standard indicator function for a given set \(A\subset \Rbb\).} Then by construction \(\sum_m \Gamma_{k,\ell m}=(Q_k)_{\ell}\) and \(\sum_{\ell} \Gamma_{k,\ell m}=p_{k,m}= (Q_k)_m\), also \(\sum_{\ell,m}\Gamma_{k,\ell m} D_{\ell m} = \sum_{\ell} (Q_k)_{\ell} D_{\ell\ell}=0\le\theta_k\) since \(D_{\ell\ell}=\costX(s_\ell,s_\ell)=0\); and \(t_\ell=0\le p_{1,\ell},p_{2,\ell}\). Hence, the feasible set is nonempty. The feasible set is defined by bounded linear equalities and inequalities in a finite-dimensional Euclidean space and thus is compact, and \((p,\Gamma,t)\mapsto \sum_{\ell} t_{\ell}\) is continuous. Existence of a solution follows immediately from the Weierstrass theorem. 
\end{proof}
\subsection{On sample test} Note that, any optimizer of \eqref{eq:LFD:LinProg} produces the WCD, with \(P_k^{\star} \Let \sum_{\ell=1}^{n} p_{k,\ell}^{\star}\delta_{s_\ell}\) for \(k \in \{1,2\}\). Then the on-support optimal test values are
\begin{align}\label{eq:on:supp:test}
\test^{\star}(s_{\ell})=\
\begin{cases}
1, & \text{if } p_{2,\ell}^{\star} > p_{1,\ell}^{\star},\\
0, & \text{if } p_{2,\ell}^{\star} < p_{1,\ell}^{\star},\\
\in\lcrc{0}{1}, & \text{if } p_{2,\ell}^{\star} = p_{1,\ell}^{\star} \, (\text{ties}).
\end{cases}
\end{align}
In case of a tie, \(\test\as(s_{\ell})\) choose either \(1\) or \(0\) w.p. \(1/2\). With this \(\test\as\), we have the worst-case risk \(\eps^{\star} = \sum_{l=1}^{n}p_{1,\ell}^{\star}(1-\test^{\star}(s_{\ell}))+ p_{2,\ell}^{\star} \test^{\star}(s_{\ell}) = 1 - \sum_{\ell=1}^{n}t_{\ell}^{\star}.\)
\begin{remark}
    \label{rem:equivalence LP}
    In general, the max-min problem \eqref{eq:RobTest:SupProb} yields an lower bound for the min-max problem originally defined in \eqref{eq:RobTest}. However, following arguments similar to those in \cite[Theorem 1]{ref:WassHypTest} based on strong duality, one can show that equality holds between the original min-max problem \eqref{eq:RobTest} and the corresponding max-min problem \eqref{eq:RobTest:SupProb}.
\end{remark}

\begin{remark}
   \label{rem:equivalence LP}
Note that, after eliminating the rows of the transport matrices associated with zero source mass, the LP \ref{eq:LFD:LinProg} contains \(n^2+3n\) decision variables. Thus, the number of decision variables, as well as the associated storage requirement, grows quadratically with the total number of training residual samples.
\end{remark}

\subsection{Kernel smoothing and a CUSUM test}\label{subsec:kernel}

The LP \eqref{eq:LFD:LinProg} identifies the WCDs
\(P_{k}^{\star}\) for \(k=1,2\), on the training atoms \(\widehat{\metspaceX}\) and the test \(\test^{\star}(s_{\ell})\) on those atoms. While this fully resolves the on-support problem, in deployment, however, incoming residuals \(z\in\metspaceX\) will rarely lie exactly on \(\widehat{\metspaceX}\). To define \(\test^{\star}\) for every \(z\), one must extend it off-support. To this end, we adopt a kernel smoothing technique, which is often preferable in practice because of its compatibility with sequential testing.

{
\renewcommand{\algorithmcfname}{\(\algoname\)}
\renewcommand{\thealgocf}{}
\begin{algorithm2e}[!ht]
\DontPrintSemicolon
\SetKwInOut{ini}{Initialize}
\SetKwInOut{giv}{Data}
\SetKwInOut{hyp}{Hyperparameter}
\ini{\(\testsignal_0 = 0\)}
\giv{Stream of offline nominal data \(\resdis\),  kernel function \(\kernel_{\sigma}\), \(h>0\)}

    Construct the training data \(\widehat{\metspaceX} \)
    
    Solve \eqref{eq:LFD:LinProg} to compute the LFDs \((P^{\star}_{1}, P^{\star}_{2})\) on \(\widehat{\metspaceX}\)

    Extend \((P^{\star}_{1}, P^{\star}_{2})\) by kernel smoothing to obtain \eqref{eq:score:function}
    
    \For{\(t = 1,2,\ldots,\)}{
    Compute the sequence \((\testsignal_t)_{t \in \Nz}\) defined in \eqref{eq:cusum:iteration}
    
    Raise an alarm if \(\testsignal_t \ge h\), else continue Step 5

    }
    
\caption{OT-based detection algorithm for CPS.}
\label{alg:seD_ord_comp}
\end{algorithm2e}
}

Choose a kernel \(\kernel:\Rbb^{d}\lra \lcro{0}{+\infty}\) with \(\int_{\Rbb^d} \kernel(t)\odif{t}=1\), \(\kernel(0) = 1\), and bandwidth \(\sigma>0\). Set \(\xi \mapsto \kernel_{\sigma}(\xi)\Let \frac{1}{\sigma^{d}}\kernel\bigl(\frac{\xi}{\sigma}\bigr)\). Then, for \(k=1,2\), define \(\metspaceX \ni z \mapsto f_k(z) \Let \sum_{\ell=1}^{n} p_{k,\ell}^{\star}\;\kernel_{\sigma}(z-s_{\ell}).\) Define the \emph{score function} by
\begin{align}\label{eq:score:function}
z \mapsto s_{\kernel}(z)& \Let \log \frac{f_2(z)}{f_1(z)} =\log\frac{\sum_{\ell=1}^n p_{2,\ell}^{\star} \kernel_{\sigma}(z-s_{\ell})}{\sum_{\ell=1}^n p_{1,\ell}^{\star} \kernel_{\sigma}(z-s_{\ell})}.
\end{align}
While there are many choices for the kernel function, we will pick the Gaussian kernel \(\xi \mapsto \kernel_{\sigma}(\xi) \Let \bigl(2\pi \sigma^2)^{-d/2}\exp{\bigl(-\norm{\xi}_2^2/2\sigma^2}\bigr)\) because of its nice properties. 
\begin{remark}[Features of the Gaussian kernel smoothing]\label{rem:nice properties}
Note that \(\xi \mapsto \kernel_{\sigma}(\xi)>0\) implies that \( z\mapsto f_k(z) >0\), this makes the score function \eqref{eq:score:function} well-defined and finite everywhere. Gaussian smoothing makes \(f_k(\cdot)\) and \(s_{\kernel}(\cdot)\) smooth and from a numerical point of view each \(\log f_k(z)\) is a log-sum-exp of quadratic terms which is stable to compute and easy to truncate to \(K\) nearest atoms for speed. Moreover \(s_{\kernel}(s_{\ell})\lra \log\frac{p_{2,\ell}^{\star}}{p_{1,\ell}^{\star}}\) as \(\sigma\downarrow 0\), which matches the Neyman-Pearson-like test on the atoms. \hfill \(\vardiamond\)
\end{remark}

Define the \emph{increments} sequence \((\increment_t)_{t \in \Nz} \Let (s_{\kernel}(z_t))_{t \in \Nz}\). We consider the CUSUM recursion \cite{ref:AN-AT-AA-SD-2023} 
\begin{align}\label{eq:cusum:iteration}
  \testsignal_0 \Let 0,\qquad \testsignal_t \Let \max\{0,\testsignal_{t-1}+\increment_t\},\quad\text{for } t\ge1,  
\end{align}
and we test \(\testsignal_t\) for every time \(t\), against some appropriately chosen threshold \(h\):
\begin{myOCP}
\begin{align*}
    \begin{cases}
        \testsignal_t \ge h \quad &\text{Reject (H0) in favor of (H1),}\\
        \testsignal_t < h & \text{Reject (H1) in favor of (H0)}.
    \end{cases}
\end{align*}    
\end{myOCP}
The method to choose the threshold is detailed in \S\ref{sec:num_exp}. We now outline all these key steps in Algorithm \(\algoname\).

\section{Theoretical guarantees}\label{sec:tg}

We establish non-asymptotic guarantees on the false positive error under (H0) with \(P_1\), incurred by the algorithm, which states that under the attack-free regime, the event \(\aset[]{\testsignal_t \ge h}\) admits a low probability. During deployment, this error serves as a practical guide to select an appropriate threshold; see \S\ref{sec:num_exp} for a discussion on how it is utilised to select a threshold.

\begin{myOCP}
\begin{theorem}\label{thrm:cusum:azuma-type:bounds:new}
Fix a finite horizon \(\horlen \in \N\). On \((\samplesp,\filtration)\) let
\(\filtration_i \Let \sigma(\increment_1,\dots,\increment_i)\), \(i=1,\dots,\horlen\) be the natural filtration.
Let \((\increment_i)_{i=1}^{\horlen}\) be an \((\filtration_i)_{i=1}^{\horlen}\)-adapted sequence with
\(\EE_1{[|\increment_i|]}<+\infty\) for every \(i=1,\dots,\horlen\).
For each \(i\), define the conditional drift and the centered increments \(\mu_i \Let \EE_1[\increment_i \mid \filtration_{i-1}]\) and \(\xi_i \Let \increment_i - \mu_i\), and assume that
under \(\PP_1\), the distribution under (H0), the conditional drift is non-positive:
\begin{align}\label{eq:assum:drift}
 \mu_i = \EE_1[\increment_i\mid \filtration_{i-1}]\ \le\ 0\quad\text{a.s. for }i=1,\dots,\horlen, \tag{A1}
\end{align}
and that there exist known constants \(\sigma_1,\ldots,\sigma_{\horlen}>0\) such that for every \(\lambda \in \Rbb\) and every \(i=1,\ldots,\horlen\):
\begin{align}\label{eq:assum:exp:bound}
\EE_1[ \exp\bigl(\lambda \xi_i \bigr) \mid \filtration_{i-1}] \le \exp\bigl( \lambda^2 \sigma_i^2/2\bigr) \quad\text{a.s.},\tag{A2}
\end{align}
where \(\EE_1\expecof[]{\cdot}\) is an expectation under \(P_1\). Define \(V_{\horlen} \Let \sum_{i=1}^{\horlen}\sigma_i^2\). Then for \(h>0\), we have 
\begingroup
\usetagform{vardiamondtag}
\begin{align}\label{eq:azuma:bound:1}
  \PP_1\Bigl(\max_{1 \le s \le \horlen} \testsignal_{s}\ge h \Bigr)
  \le 2\exp\bigl(-h^2/8V_{\horlen}\bigr).
\end{align}
\endgroup
\end{theorem}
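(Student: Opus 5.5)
The plan is to reduce the CUSUM statistic to increments of a martingale, and then apply a maximal Chernoff (Azuma–Hoeffding type) bound to that martingale in both directions. The factor \(2\) and the \(8\) in \eqref{eq:azuma:bound:1} suggest exactly this: a two-sided maximal bound at level \(h/2\).

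\textbf{Step 1 (pathwise representation).} Set \(R_0 \Let 0\) and \(R_t \Let \sum_{i=1}^t \increment_i\). Unrolling the recursion \eqref{eq:cusum:iteration} by induction gives the Lindley form
\[
\testsignal_t = R_t - \min_{0\le j\le t} R_j = \max_{0\le j\le t}(R_t - R_j).
\]
Write \(M_t \Let \sum_{i=1}^t \xi_i\) with \(M_0 \Let 0\). Since \(\increment_i = \mu_i + \xi_i\) and \(\mu_i \le 0\) a.s. by \eqref{eq:assum:drift}, we have \(R_t - R_j \le M_t - M_j\) for all \(j\le t\). Hence
\[
\Bigl\{\max_{1\le s\le \horlen}\testsignal_s \ge h\Bigr\} \subseteq \Bigl\{\exists\, 0\le j< s\le \horlen:\ M_s - M_j \ge h\Bigr\}.
\]
If \(M_s - M_j \ge h\), then either \(M_s \ge h/2\) or \(M_j \le -h/2\). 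The event above is therefore contained in
\[
\Bigl\{\max_{1\le s\le\horlen} M_s \ge h/2\Bigr\} \cup \Bigl\{\max_{1\le s\le \horlen}(-M_s)\ge h/2\Bigr\}.
\]
The index \(j=0\) is harmless because \(M_0=0\).

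\textbf{Step 2 (exponential supermartingale).} Fix \(\lambda>0\) and define
\[
Z_t \Let \exp\Bigl(\lambda M_t - \tfrac{\lambda^2}{2}\textstyle\sum_{i\le t}\sigma_i^2\Bigr).
\]
Each \(\xi_i\) is \(\filtration_i\)-measurable and \(\sigma_i\) is deterministic. Assumption \eqref{eq:assum:exp:bound} then gives \(\EE_1[Z_t\mid\filtration_{t-1}] \le Z_{t-1}\), so \((Z_t)\) is a nonnegative supermartingale with \(Z_0=1\). Integrability of \(\xi_i\) follows from \(\EE_1|\increment_i|<\infty\), and the conditional mgf bound makes \(Z_t\) integrable.

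On \(\{\max_s M_s \ge h/2\}\) we have \(\max_s Z_s \ge \exp(\lambda h/2 - \lambda^2 V_\horlen/2)\), because the partial variance sums are bounded by \(V_\horlen\). Ville's (Doob's) maximal inequality for nonnegative supermartingales then gives
\[
\PP_1\Bigl(\max_{s} M_s \ge h/2\Bigr) \le \exp\bigl(-\lambda h/2 + \lambda^2 V_\horlen/2\bigr).
\]
Choosing \(\lambda = h/(2V_\horlen)\) yields the bound \(\exp(-h^2/8V_\horlen)\).

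\textbf{Step 3 (lower tail and union bound).} Assumption \eqref{eq:assum:exp:bound} holds for all \(\lambda\in\Rbb\), hence also for \(-\lambda\). The same argument applied to \(-M\) therefore gives the identical bound for \(\max_s(-M_s)\ge h/2\). The union bound from Step 1 then produces \(2\exp(-h^2/8V_\horlen)\).

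\textbf{Main obstacle.} The delicate point is Step 1. The CUSUM restarts at zero, so it is not a single martingale tail. The reduction must control differences \(M_s - M_j\) uniformly over pairs, and this is exactly what forces the split at level \(h/2\), which turns \(h^2/2\) into \(h^2/8\). The rest is the standard Azuma–Ville argument.
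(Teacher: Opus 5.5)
Your proposal is correct and follows the paper's proof essentially step for step. The shared steps are: the CUSUM (Lindley) identity; domination of $R_t - R_j$ by $M_t - M_j$ using the non-positive drift; the split at level $h/2$ into two one-sided maximal events; the exponential supermartingale with a maximal inequality and $\lambda$ optimized to give $\exp(-h^2/8V_t)$; and the final two-sided union bound. The only cosmetic differences are that you cite Ville's inequality directly, whereas the paper rederives it by optional stopping at the bounded time $\min\{\tau,t\}$ and phrases the drift split as a Doob decomposition.
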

\end{myOCP}

\begin{remark}
    \label{rem:interpretation-theorem}
    We note \eqref{eq:azuma:bound:1} indicates that the probability that there exists a time \(s \in \lcrc{1}{\horlen}\) such that \(S_s\) violates the threshold \(h\) under (H0) and consequently, the detector raises a false alarm at some time \(s\) before \(\horlen\) is upper-bounded by \(2\exp\bigl(-h^2/8V_{\horlen}\bigr)\). Thus, \eqref{eq:azuma:bound:1} is a \textbf{finite horizon false alarm probability}, and therefore only remains valid for finite \(\horlen\).
\end{remark}

\begin{remark}
Under \(\PP_1\), (\ref{eq:assum:drift}) imposes a non-positive drift condition on the score increments: conditionally on the past, \(X_i\) should not, on average, drive the CUSUM statistic upward. This is because, in the no-attack regime, the detector should not tend to cross a suitably chosen threshold. Assumption~\ref{eq:assum:exp:bound} is a \emph{conditional sub-Gaussian} requirement on the centered increment \(\xi_i=X_i-\mu_i\), and controls the random fluctuations of the score around its predictable drift \(\mu_i\). 
The CUSUM sequence \((S_t)_{t \in \Nz}\) is compatible with standard preprocessing techniques such as saturation or clipping, which make the score function uniformly bounded and thereby enforce (\ref{eq:assum:exp:bound}) by construction \cite{ref:huber1965robust}. Consequently, (\ref{eq:assum:drift}) and (\ref{eq:assum:exp:bound}) remain general enough to accommodate a broad class of score functions \eqref{eq:score:function}.
\hfill \(\vardiamond\)
\end{remark}

\begin{proof}[Proof of Theorem \ref{thrm:cusum:azuma-type:bounds:new}]
Define \(A_t \Let \sum_{i=1}^t \increment_i\) with \(A_0 \Let 0\). Then, the CUSUM identity can be written as
\begin{align}\label{eq:St:to:At}
    \testsignal_t = A_t - \min_{0\le k \le t} A_k = \max_{0 \le k \le t}\bigl( A_t - A_k \bigr). 
\end{align}
To see this, define \(B_t \Let A_t - \min_{0 \le k \le t} A_k\) with \(B_0 = 0\). Since \(\min_{0\le k\le t}A_k=\min\{\min_{0\le k\le t-1}A_k,\ A_t\},\) we get \(B_t = \max \aset[\big]{ A_t- \min_{0\le k\le t-1}A_k, 0}= \max \aset[\big]{B_{t-1} + \increment_t, 0},\) which is exactly the CUSUM recursion with \(S_0=0\). Hence \(B_t=S_t\). Let us now define the \(\filtration_{t-1}\)-measurable one step ahead conditional mean of \(\increment_t\) and the innovation or centered increment, respective, by \(\mu_i \Let \EE_1[\increment_i \mid \filtration_{i-1}]\) and \(\xi_i \Let \increment_i - \mu_i.\) Note that, since \(\EE_1[\increment_i] < +\infty\) we have \(\EE_1[\mu_i] < +\infty\); moreover, \(\xi_i\) has zero conditional mean, i.e., \(\EE_1[\xi_i \mid \filtration_{i-1}] =  \EE_1[\increment_i-\mu_i \mid \filtration_{i-1}]\). Now consider the cumulative innovation sequence \(M_t \Let \sum_{i=1}^t \xi_i\) and the predictable process \(D_t \Let \sum_{i=1}^t \mu_i.\)
Note that \(M_t\) is a finite sum of integrable \(\filtration_i\)-measurable terms, so it is \(\filtration_t\)-measurable and integrable. Moreover, we have 
\begin{align}
\EE_1[M_t\mid \filtration_{t-1}]
& = \EE_1\Bigl[M_{t-1}+\bigl(\increment_t -\mu_t\bigr)\Big{|}\filtration_{t-1}\Bigr] \nn \\& = M_{t-1}+ \EE_1[\increment_t \mid \filtration_{t-1}] - \mu_t
= M_{t-1}. \nn
\end{align}
Thus, \((M_t,\filtration_t)\) is a Martingale. We show that \(\bigl(D_t\bigr)_{t \ge 1}\) is a predictable process \cite[Section 1.4]{ref:Shreve:Karatzas}: for \(t\ge 1\), we have \(D_t=\sum_{i=1}^t \mu_i\), and each \(\mu_i\) is \(\filtration_{i-1}\)-measurable. Because \(\filtration_{i-1}\subseteq \filtration_{t-1}\) when \(i\le t\), the sum is \(\filtration_{t-1}\)-measurable. Hence \((D_t)_{t \ge 0}\) is predictable, i.e., \(D_t\) is \(\filtration_{t-1}\)-measurable for \(t\ge1\). Moreover it is integrable: indeed, we have \(\EE_1[\mu_i| \le \EE_1 \bigl[\abs{X_i}\bigr]< + \infty\) and consequently, for any finite horizon \(t\), \(\EE_1\bigl[\abs{D_t}\bigr] \le\ \sum_{i=1}^t \EE_1\bigl[\abs{\mu_i}\bigr] \le\ \sum_{i=1}^t \EE_1\bigl[\abs{X_i}\bigr] < + \infty.\) Finally, by definition \(D_0=0\). Thus, all the conditions for the Doob's decomposition theorem \cite[Theorem 4.10]{ref:Shreve:Karatzas} are met and we have \(A_t = M_t +D_t\).

Also, due to Assumption \eqref{eq:assum:drift}, \(\mu_i \le 0\) a.s. for all \(i\) and \(D_u - D_k = \sum_{i=k+1}^{u} \mu_i \le 0\) a.s. for \(u \le t\), and thus, \((D_u)_{u \in \Nz}\) is non-increasing sequence.
Now, using \eqref{eq:St:to:At} and \(A_s - A_k=(M_u - M_k)+(D_u - D_k)\) along with the preceding arguments, we see that for each \(u \le t\),
\begin{align}\label{eq:martingal:dominance}
    S_u = \max_{0\le k\le u}(A_u - A_k) \le \max_{0\le k\le u}\bigl(M_u - M_k\bigr) \text{ a.s.} 
\end{align}
This implies that 
\begin{align*}
    \max_{1 \le u \le t} S_u \le \max_{1 \le u \le t} \max_{0\le k\le u}\bigl(M_u - M_k\bigr) \text{ a.s.}
\end{align*}
such that the inclusion holds for any \(h>0\):
\begin{align*}
    \aset[\Big]{\max_{1 \le u \le t} S_u \ge h} \subseteq \aset[\bigg]{\max_{1 \le u \le t} \max_{0\le k\le u}\bigl(M_u - M_k\bigr) \ge h},
\end{align*}
and \(M_u - M_k \le \max_{1 \le s \le t} M_s + \max_{1 \le s \le t} (-M_s)\) is valid for any indices \(u, k \in \Nz\).
Using the fact that for any real numbers \(x,y\) and \(h>0\), \(x-y\ge h\) implies \(x\ge h/2\) or \(-y\ge h/2\) we see that 
\begin{align}\label{eq:aux-step}
    &\aset[\bigg]{\max_{1 \le u \le t} \max_{0\le k\le u}\bigl(M_u - M_k\bigr) \ge h} \nn\\
    &\subseteq \aset[\Big]{\max_{1\le s\le t} M_s\ge \tfrac{h}{2}} \cup \aset[\Big]{\max_{0\le s\le t} (-M_s)\ge \tfrac{h}{2}}
\end{align}
Combining \eqref{eq:martingal:dominance} and \eqref{eq:aux-step} together, we finally get the set-theoretic inclusion
\begin{align}\label{eq:set:bound}
 &\aset[\Big]{\max_{1 \le u \le t} S_u \ge h}\nn\\ &\subseteq 
\aset[\Big]{\max_{1\le s\le t} M_s\ge \tfrac{h}{2}} \cup \aset[\Big]{\max_{0\le s\le t} (-M_s)\ge \tfrac{h}{2}}.
\end{align}

We now show that, for \(a>0\) the concentration bound holds:
\begin{align}\label{eq:final:half:bound}
\PP_1 \Bigl(\max_{1\le s\le t} M_s \ge a \Bigr) \le \exp\bigl( -a^2/2V_t\bigr).
\end{align}
Then we will apply the same bound to \(-M_s\). To this end, fix \(\lambda>0\) and define the process \(Z_s \Let \exp \bigl( \lambda M_s - \frac{\lambda^2}{2}V_s \bigr),\) for \(s=0,1,\dots,t,\) with \(M_0=0\), \(V_0=0\), hence \(Z_0=1\). We show that \((Z_s,\filtration_s)\) is a nonnegative supermartingale.  Indeed, writing the martingale increment: \(M_s = M_{s-1} + \xi_s\) and \(V_s = V_{s-1} + \sigma_s^2\), we see that \(Z_s = \exp \Bigl( \lambda(M_{s-1}+\xi_s) - \frac{\lambda^2}{2}(V_{s-1}+\sigma_s^2)\Bigr) = Z_{s-1}\cdot \exp \Bigl(\lambda\xi_s-\frac{\lambda^2}{2}\sigma_s^2\Bigr).\) Taking conditional expectation w.r.t \(\filtration_{s-1}\) and recalling that \(Z_{s-1}\) is \(\filtration_{s-1}\)-measurable, we get
\begin{align}
 \EE_1[Z_s\mid \filtration_{s-1}] &= Z_{s-1}
\EE_1 \Bigl[\exp \Bigl(\lambda\xi_s-\frac{\lambda^2}{2}\sigma_s^2\Bigr)\Bigm|\filtration_{s-1}\Bigr] \nn \\& = Z_{s-1} \epower{-\frac{\lambda^2}{2}\sigma_s^2}
\EE_1 \Bigl[ \epower{\lambda\xi_s}\mid \filtration_{s-1}\Bigr] \le Z_{s-1}, \nn
\end{align}
where in the last step we employed \eqref{eq:assum:exp:bound}. Thus
\((Z_s,\filtration_s)\) is a nonnegative supermartingale.

We also note that using monotonicity of the exponential and using the fact that \(V_s \le V_t\) for \(s \le t\), the event \(\aset[]{\exists\ s\le t \suchthat\ M_s\ge a}\) implies \(\aset[]{\exists\ s\le t \suchthat  Z_s\ge c}\) where \(
c \Let \exp \Bigl(\lambda a - \frac{\lambda^2}{2}V_t\Bigr)\). Then we have the inclusion
\begin{align}\label{eq:set:bound2}
\aset[\Big]{\max_{1\le s\le t} M_s\ge a} \subseteq
\aset[\Big]{\max_{1\le s\le t} Z_s\ge c}.    
\end{align}
Define the hitting time \(\tau \Let \inf \aset[]{1\le s\le t \suchthat  Z_s\ge c}\), with the convention \(\tau= +\infty\) if no such \(s\) exists. Then \(\aset[]{\max_{1\le s\le t} Z_s\ge c }=\aset[]{\tau\le t}\). Define the bounded stopping time \(\tau' \Let \min\{\tau,t\}\) so that \(\tau'\le t\) always. Since \(Z_s\) is a nonnegative supermartingale and \(\tau'\) is bounded, one has \(\EE_1[Z_{\tau'}]\le \EE_1[Z_0]=1\). Moreover, on the event \(\aset[]{\tau\le t}\) we have \(\tau'=\tau\) and \(Z_{\tau'}=Z_{\tau}\ge c\). Hence \(Z_{\tau'} \ge c \indic{\aset[]{\tau\le t}}.\) Taking expectation and recalling the fact that \(\EE_1[Z_{\tau'}]\le 1\), we obtain 
\begin{align}\label{eq:Zs:bound}
\PP_1 \bigl( \max_{1\le s\le t} Z_s\ge c \bigr) \le (1/c).
\end{align}
From \eqref{eq:set:bound2} and \eqref{eq:Zs:bound} we have
\begin{align}\label{eq:set:prob:bound}
\PP_1 \Bigl( \max_{1\le s\le t} M_s\ge a \Bigr)
&\le \PP_1 \Bigl( \max_{1\le s\le t} Z_s\ge c \Bigr)
\le (1/c).  
\end{align}
Optimizing the right-hand side of \eqref{eq:set:prob:bound} with respect to \(\lambda>0\) gives \(\bar{\lambda} = \frac{a}{V_t}\) and this yields the required bound \eqref{eq:final:half:bound}.

The identical argument applied to \(-M_s\) (noting that \eqref{eq:assum:exp:bound} holds for all \(\lambda\in \Rbb\), hence also for \(-\xi_i\)) gives
\begin{align}\label{eq:final:other:half:bound}
\PP_1 \Bigl( \max_{0\le s\le t}(-M_s)\ge a \Bigr)
\le \exp ( -a^2/2V_t) .
\end{align}
Finally, \eqref{eq:final:half:bound}--\eqref{eq:final:other:half:bound} (with \(a=h/2\)) together with \eqref{eq:set:bound} gives us 
\begin{align}\label{eq:final bound}
&\PP_1 \Big(\max_{1 \le u \le t} S_u \ge h\Big)\nn\\
& \le \PP_1 \Bigl(\max_{1\le s\le t} M_s \ge h/2 \Bigr) + \PP_1 \Big(\max_{0\le s\le t}(-M_s)\ge h/2\Big)\nn\\
& \le 2\exp \Bigl(-\frac{h^2/4}{2 V_t} \Bigr)
=2 \exp \Bigl(-\frac{h^2}{8V_t}\Bigr). 
\end{align}
The proof is complete.
\end{proof}

We provide a \(\text{(H1)}\)-side guarantee as well following \cite{ref:YHH-VMV-24}.

\begin{myOCP}
\begin{theorem}[Post-attack detection guarantee]\label{thm:post:attack:delay}
Fix \(h>0\) and an attack time \(\nu\in\N\). On \((\samplesp,\filtration)\), let
\(\filtration_0\Let\{\varnothing,\samplesp\}\) and \(\filtration_i\Let\sigma(\increment_1,\ldots,\increment_i)\), \(i\in\N\), be the natural filtration, and define the detection time
\begin{align*}
 \tau_h\Let\inf \aset[]{t\in\N \suchthat \testsignal_t\ge h },
\end{align*}
with the convention that \(\inf\varnothing=+\infty\), where \((\testsignal_t)_{t\in\N}\) is defined in \eqref{eq:cusum:iteration}. Assume that, under \(\PP_{\nu}\), the probability measure corresponding to an attack initiated at time \(\nu\), there exist constants \(\theta>0\) and \(c_{\theta}>0\) such that, for every \(i\ge\nu\),
\begin{align}\label{eq:assum:post:attack:exp}
\EE_{\nu}\left[\exp\bigl(-\theta\increment_i\bigr)
\,\middle|\,\filtration_{i-1}\right]\le\exp(-c_{\theta})
\quad\text{a.s.}
\end{align}
Then, for every \(d\in\N\), we have the bound
\begin{align}\label{eq:post:attack:delay:bound}
\PP_{\nu}\bigl(\tau_h-\nu\ge d\,\big|\,\tau_h\ge\nu\bigr)\le
\min\left\{1,\,\exp\bigl(\theta h-dc_{\theta}\bigr)
\right\},
\end{align}
provided that \(\PP_{\nu}(\tau_h\ge\nu)>0\). Consequently, for every prescribed missed-detection tolerance \(\delta_{\mathrm D}\in(0,1)\), any
\begin{align}\label{eq:post:attack:delay:threshold}
d \ge\left\lceil\frac{\theta h+\log(1/\delta_{\mathrm D})}{c_{\theta}}
\right\rceil
\end{align}
ensures that \(\PP_{\nu}\bigl(\tau_h-\nu\ge d\,\big|\,\tau_h\ge\nu
\bigr)\le\delta_{\mathrm D}.\) Moreover, the conditional average detection delay satisfies
\begin{align}
\EE_{\nu}\bigl[\tau_h-\nu\,\big|\,\tau_h\ge\nu\bigr]\le\left\lceil
\frac{\theta h}{c_{\theta}}\right\rceil+\frac{1}{\exp(c_{\theta})-1}. \hspace{3mm} \vardiamond \nn
\end{align}
\end{theorem}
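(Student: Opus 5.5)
The plan is to use the CUSUM lower bound \(\testsignal_t\ge \testsignal_{\nu-1}+\sum_{i=\nu}^{t}\increment_i\ge \sum_{i=\nu}^{t}\increment_i\). This follows by induction from \(\testsignal_t=\max\{0,\testsignal_{t-1}+\increment_t\}\ge \testsignal_{t-1}+\increment_t\) and \(\testsignal_{\nu-1}\ge 0\). With it, the event \(\{\tau_h-\nu\ge d\}\cap\{\tau_h\ge\nu\}\) is contained in \(\{\testsignal_t<h \text{ for } t=\nu,\ldots,\nu+d-1\}\cap\{\tau_h\ge\nu\}\), and in particular in
\[
\Bigl\{\textstyle\sum_{i=\nu}^{\nu+d-1}\increment_i<h\Bigr\}\cap\{\tau_h\ge\nu\}.
\]
(Here I take \(\tau_h\ge\nu\) to mean no alarm before \(\nu\); if \(\tau_h-\nu\ge d\), then times \(\nu,\dots,\nu+d-1\) have no alarm.) The event \(\{\tau_h\ge\nu\}=\{\testsignal_1<h,\ldots,\testsignal_{\nu-1}<h\}\) lies in \(\filtration_{\nu-1}\), which is what allows conditioning on it.

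The main step is an exponential Chernoff bound driven by the supermartingale built from \eqref{eq:assum:post:attack:exp}. For \(t\ge\nu-1\), set \(W_t\Let\exp\bigl(-\theta\sum_{i=\nu}^{t}\increment_i+(t-\nu+1)c_\theta\bigr)\), with \(W_{\nu-1}=1\). By \eqref{eq:assum:post:attack:exp}, \(\EE_\nu[W_t\mid\filtration_{t-1}]\le W_{t-1}\). Iterating the tower property gives \(\EE_\nu[W_{\nu+d-1}\mathbbm{1}_{\{\tau_h\ge\nu\}}]\le \PP_\nu(\tau_h\ge\nu)\), since the indicator is \(\filtration_{\nu-1}\)-measurable. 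Markov's inequality then yields
\[
\PP_\nu\Bigl(\textstyle\sum_{i=\nu}^{\nu+d-1}\increment_i<h,\ \tau_h\ge\nu\Bigr)\le \PP_\nu\bigl(W_{\nu+d-1}>e^{-\theta h+dc_\theta},\ \tau_h\ge\nu\bigr)\le e^{\theta h-dc_\theta}\PP_\nu(\tau_h\ge\nu).
\]
Dividing by \(\PP_\nu(\tau_h\ge\nu)>0\) and taking the minimum with \(1\) gives \eqref{eq:post:attack:delay:bound}.

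The threshold statement \eqref{eq:post:attack:delay:threshold} follows by solving \(\theta h-dc_\theta\le\log\delta_{\mathrm D}\) for \(d\).

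For the average delay, write \(\EE_\nu[\tau_h-\nu\mid\tau_h\ge\nu]=\sum_{d\ge1}\PP_\nu(\tau_h-\nu\ge d\mid\tau_h\ge\nu)\), which is valid for a nonnegative integer variable. Let \(d_0\Let\lceil\theta h/c_\theta\rceil\). Bound the first \(d_0\) terms by \(1\) each. For the remaining terms use \(e^{\theta h-dc_\theta}\le e^{-(d-d_0)c_\theta}\) for \(d>d_0\), which holds because \(\theta h\le d_0c_\theta\). This gives a geometric series \(\sum_{j\ge1}e^{-jc_\theta}=1/(e^{c_\theta}-1)\), so the expectation is at most \(d_0+1/(e^{c_\theta}-1)\). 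This also shows \(\tau_h<\infty\) a.s. on \(\{\tau_h\ge\nu\}\).

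The main obstacle is the careful conditioning on \(\{\tau_h\ge\nu\}\). One must check that it is \(\filtration_{\nu-1}\)-measurable and that the supermartingale property holds only from \(i\ge\nu\), where \eqref{eq:assum:post:attack:exp} applies. Beyond that, the off-by-one conventions (whether \(\tau_h=\nu\) counts as delay \(0\)) should be fixed so that the window \(\nu,\ldots,\nu+d-1\) contains exactly \(d\) increments.
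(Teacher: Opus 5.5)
Your proposal is correct and follows essentially the same route as the paper: the bound \(\testsignal_{\nu+d-1}\ge\sum_{i=\nu}^{\nu+d-1}\increment_i\), which the paper reads off the CUSUM identity with \(k=\nu-1\); the Chernoff bound obtained by iterating the tower property from the assumed one-step exponential-moment bound, using \(\{\tau_h\ge\nu\}\in\filtration_{\nu-1}\); and the tail-sum estimate with the first \(\lceil\theta h/c_\theta\rceil\) terms bounded by one. Your supermartingale \(W_t\), combined with an unconditional Markov inequality on the intersection with \(\{\tau_h\ge\nu\}\), is only a repackaging of the paper's conditional Markov step.
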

\end{myOCP}

\begin{proof}
Define \(A_{\nu}\Let\aset[]{\tau_h\ge\nu}.\) Note that \(A_{\nu}\in\filtration_{\nu-1}\), since it is the event that the detector does not cross the threshold before \(\nu\). Fix \(d\in\N\), and define the cumulative post-attack score
\begin{align*}
R_{\nu,d}\Let\sum_{i=\nu}^{\nu+d-1}\increment_i.
\end{align*}
By the CUSUM identity \eqref{eq:St:to:At}, we have
\begin{align*}
\testsignal_{\nu+d-1}=\max_{0\le k\le\nu+d-1}\sum_{i=k+1}^{\nu+d-1}\increment_i\ge\sum_{i=\nu}^{\nu+d-1}\increment_i=R_{\nu,d},
\end{align*}
where the inequality follows by choosing \(k=\nu-1\). Moreover, on the event \(\aset[]{\tau_h\ge\nu+d}\), the detector has not crossed the threshold by time \(\nu+d-1\), and hence \(\testsignal_{\nu+d-1}<h\). Consequently,
\begin{align}\label{eq:post:attack:event:inclusion}
\aset[]{\tau_h\ge\nu+d}\subseteq A_{\nu}\cap\aset[]{R_{\nu,d}<h}.
\end{align}

We next establish the conditional exponential-moment bound
\begin{align}\label{eq:post:attack:sum:mgf}
\EE_{\nu}\left[\exp\bigl(-\theta R_{\nu,d}\bigr)\,\middle|\,\filtration_{\nu-1}\right]\le\exp(-dc_{\theta}).
\end{align}
Indeed, by the tower property and the fact that \(\exp\bigl(-\theta\sum_{i=\nu}^{\nu+d-2}\increment_i\bigr)\) is \(\filtration_{\nu+d-2}\)-measurable, we have
\begin{align*}
&\EE_{\nu}\left[\exp\left(-\theta\sum_{i=\nu}^{\nu+d-1}\increment_i\right)\,\middle|\,\filtration_{\nu-1}\right]\\
&=\EE_{\nu}\Bigg[\begin{aligned}
&\exp\left(-\theta\sum_{i=\nu}^{\nu+d-2}\increment_i\right)\\
&\qquad\times\EE_{\nu}\left[\exp\bigl(-\theta\increment_{\nu+d-1}\bigr)\,\middle|\,\filtration_{\nu+d-2}\right]
\end{aligned}\,\Bigg|\,\filtration_{\nu-1}\Bigg]\\
&\le\exp(-c_{\theta})\EE_{\nu}\left[\exp\left(-\theta\sum_{i=\nu}^{\nu+d-2}\increment_i\right)\,\middle|\,\filtration_{\nu-1}\right].
\end{align*}
where the inequality follows from \eqref{eq:assum:post:attack:exp}. Repeating this argument for the remaining \(d-1\) increments yields \eqref{eq:post:attack:sum:mgf}.
Since \(\theta>0\), we have
\begin{align*}
\aset[]{R_{\nu,d}<h}=\aset[]{\exp(-\theta R_{\nu,d})>\exp(-\theta h)}.
\end{align*}
Thus, the conditional Markov inequality and \eqref{eq:post:attack:sum:mgf} give
\begin{align}\label{eq:post:attack:chernoff}
\PP_{\nu}\left(R_{\nu,d}<h\,\middle|\,\filtration_{\nu-1}\right)&\le\exp(\theta h)\EE_{\nu}\left[\exp(-\theta R_{\nu,d})\,\middle|\,\filtration_{\nu-1}\right]\nn\\
&\le\exp\bigl(\theta h-dc_{\theta}\bigr).
\end{align}
Using \eqref{eq:post:attack:event:inclusion}, the fact that \(A_{\nu}\in\filtration_{\nu-1}\), and \eqref{eq:post:attack:chernoff}, we obtain
\begin{align*}
\PP_{\nu}\left(\tau_h\ge\nu+d\,\middle|\,\filtration_{\nu-1}\right)&\le\indic{A_{\nu}}\PP_{\nu}\left(R_{\nu,d}<h\,\middle|\,\filtration_{\nu-1}\right)\\
&\le\indic{A_{\nu}}\exp\bigl(\theta h-dc_{\theta}\bigr)\quad\text{a.s.}
\end{align*}
Taking expectations on both sides yields
\begin{align*}
\PP_{\nu}(\tau_h\ge\nu+d)\le\PP_{\nu}(A_{\nu})\exp\bigl(\theta h-dc_{\theta}\bigr).
\end{align*}
Since \(A_{\nu}=\aset[]{\tau_h\ge\nu}\) and \(\PP_{\nu}(\tau_h\ge\nu)>0\), we obtain
\begin{align*}
\PP_{\nu}\bigl(\tau_h-\nu\ge d\,\big|\,\tau_h\ge\nu\bigr)\le\exp\bigl(\theta h-dc_{\theta}\bigr).
\end{align*}
Combining this estimate with the trivial upper bound \(1\) proves \eqref{eq:post:attack:delay:bound}. For any \(\delta_{\mathrm D}\in(0,1)\), the condition
\begin{align*}
d\ge\left\lceil\frac{\theta h+\log(1/\delta_{\mathrm D})}{c_{\theta}}\right\rceil
\end{align*}
implies \(\theta h-dc_{\theta}\le-\log(1/\delta_{\mathrm D})=\log(\delta_{\mathrm D}),\)
and hence \(\exp\bigl(\theta h-dc_{\theta}\bigr)\le\delta_{\mathrm D}.\) This proves the missed-detection guarantee.

Finally, set \(m\Let\left\lceil\tfrac{\theta h}{c_{\theta}}\right\rceil.\)
Since \(\tau_h-\nu\) is a nonnegative integer-valued random variable on \(A_{\nu}\), the tail-sum formula and \eqref{eq:post:attack:delay:bound} yield
\begin{align*}
&\EE_{\nu}\bigl[\tau_h-\nu\,\big|\,\tau_h\ge\nu\bigr]=\sum_{d=1}^{+\infty}\PP_{\nu}\bigl(\tau_h-\nu\ge d\,\big|\,\tau_h\ge\nu\bigr)\\
&\le\sum_{d=1}^{m}1+\sum_{d=m+1}^{+\infty}\exp\bigl(\theta h-dc_{\theta}\bigr)\\
&=m+\frac{\exp\bigl(\theta h-(m+1)c_{\theta}\bigr)}{1-\exp(-c_{\theta})}=m+\frac{\exp\bigl(\theta h-mc_{\theta}\bigr)}{\exp(c_{\theta})-1}.
\end{align*}
By the definition of \(m\), we have \(mc_{\theta}\ge\theta h\), and therefore \(\exp\bigl(\theta h-mc_{\theta}\bigr)\le 1.\) Consequently,
\begin{align*}
\EE_{\nu}\bigl[\tau_h-\nu\,\big|\,\tau_h\ge\nu\bigr]\le\left\lceil\frac{\theta h}{c_{\theta}}\right\rceil+\frac{1}{\exp(c_{\theta})-1}.
\end{align*}
The proof is complete.
\end{proof}

\begin{remark}
Observe that Theorem \ref{thrm:cusum:azuma-type:bounds:new} and Theorem \ref{thm:post:attack:delay} quantify the effects of the threshold \(h\). For prescribed tolerances \(\eta_{\mathrm{FA}},\eta_{\mathrm{D}}\in \loro{0}{1}\), the finite-horizon false-alarm guarantee requires 
\[
h\ge \sqrt{8V_T\log\left(\frac{2}{\eta_{\mathrm{FA}}}\right)},
\] 
which is obtained from Theorem  \ref{thrm:cusum:azuma-type:bounds:new}), whereas detection within \(d\) post-attack samples with missed-detection probability at most \(\eta_{\mathrm{D}}\) is guaranteed if 
\[
h\le \frac{dc_{\theta}-\log(1/\eta_{\mathrm{D}})}{\theta}
\]
which is obtained from Theorem \ref{thm:post:attack:delay}. Hence, both guarantees hold whenever
\begin{align}
\sqrt{8V_T\log\left(\frac{2}{\eta_{\mathrm{FA}}}\right)}
\leq h\le \frac{dc_{\theta}-\log(1/\eta_{\mathrm{D}})}{\theta}. \nn
\end{align}
Thus, increasing \(h\) improves finite-horizon false-alarm control but worsens the finite-window missed-detection and delay guarantees. In particular, when \(c_{\theta}\) is small, a larger detection window \(d\) is required for the two guarantees to be simultaneously feasible.
The threshold used in the numerical study is calibrated from nominal data to control false alarms. Theorem \ref{thm:post:attack:delay} theoretically complements this calibration by quantifying the resulting post-attack missed-detection probability and average detection delay, thereby guiding the designer in selecting a threshold that balances these competing objectives.
\end{remark}

\section{Numerical validation and discussion}\label{sec:num_exp}
We compare the performance of our detector with that of the CUSUM detector \cite{ref:AN-AT-AA-SD-2023} in Gaussian settings and provide positive results in certain non-Gaussian regimes. 
We note that a a direct numerical comparison with \cite{ref:DL:SM:LCSS:WassDetection,ref:VR:NH:JR:TS:WassAnomaly,ref:YF:DL:CS:WassDetection:FDI} would not be fair, as these works solve materially different detection problems. Specifically, \cite{ref:VR:NH:JR:TS:WassAnomaly} tunes a fixed quadratic detector using a nominal moment-based ambiguity set, \cite{ref:DL:SM:LCSS:WassDetection} uses only nominal data and a sliding-window Wasserstein statistic, and \cite{ref:YF:DL:CS:WassDetection:FDI} assumes structured attack channels and a parity-space detector with a reachability-based design objective. In contrast, we do not require a parametric attack model, uses residual data from both nominal and attacked regimes, computes the WCDs through robust binary hypothesis testing, and applies the resulting score sequentially through CUSUM. Implementing \cite{ref:DL:SM:LCSS:WassDetection,ref:VR:NH:JR:TS:WassAnomaly,ref:YF:DL:CS:WassDetection:FDI} in our setting would therefore require substantial reformulation and non-equivalent tuning.

We considered the benchmark linearized quadruple-tank process \cite[Eq. \(1\)]{ref:KHJ-02}:
{\footnotesize\begin{align*}
    A = \begin{pmatrix}
        0.968 & 0 & 0.082 & 0\\
        0 & 0.978 & 0 & 0.064\\
        0 & 0 & 0.917 & 0\\
        0 & 0 & 0 & 0.935
    \end{pmatrix}, \, B = \begin{pmatrix}
        0.164 & 0.004\\0.002 & 0.124\\0 & 0.092\\0.06 & 0
    \end{pmatrix},
\end{align*}}
with \(C = \identity{4}\) (the \((4 \times 4)\) identity matrix).

\begin{figure*}
\centering
\begin{subfigure}{0.45\linewidth}
    \includegraphics[scale = 0.14]{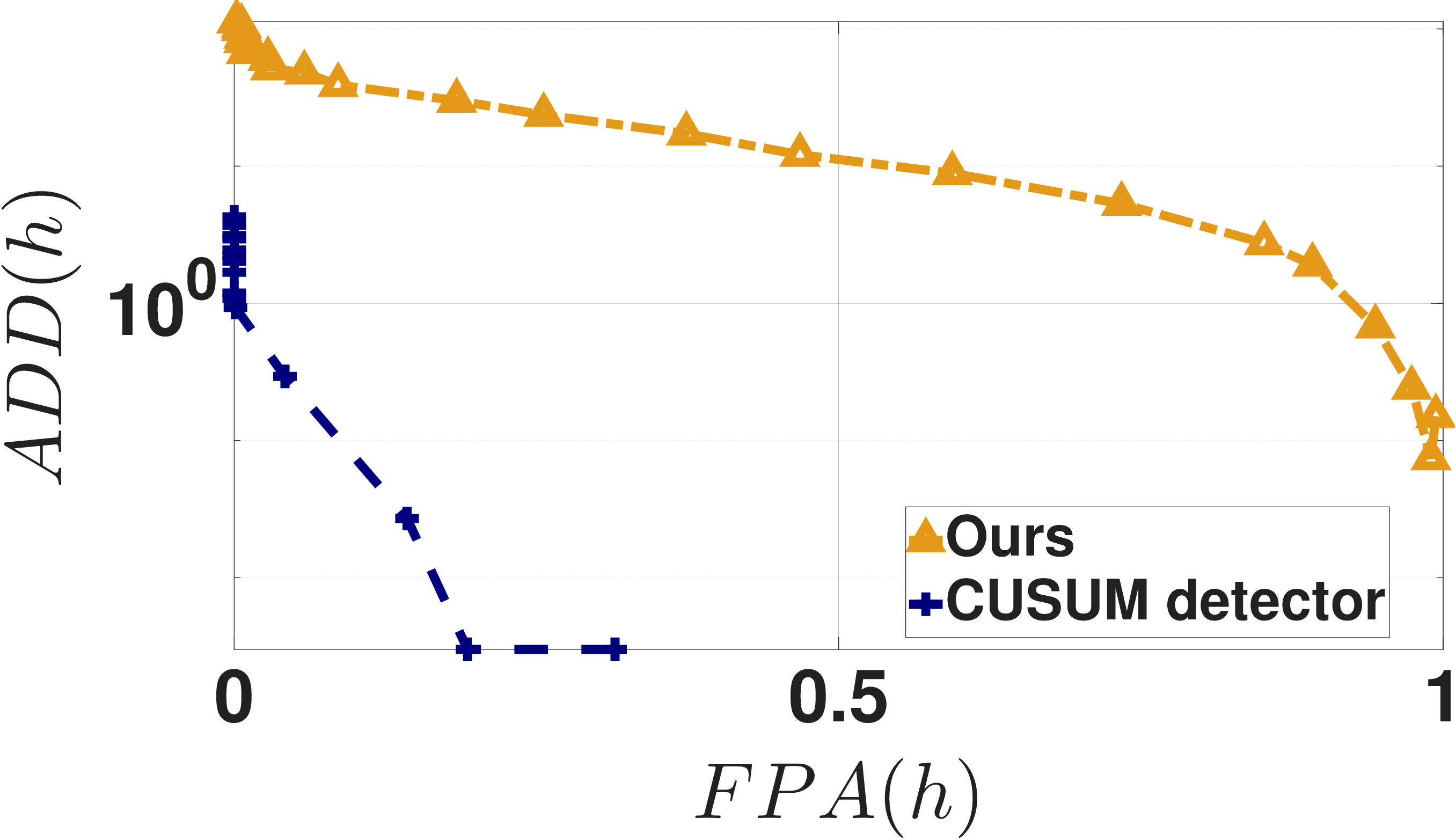}
    \label{subfig:gaus_att_15}
\end{subfigure} 
\begin{subfigure}{0.45\linewidth}
    \includegraphics[scale = 0.14]{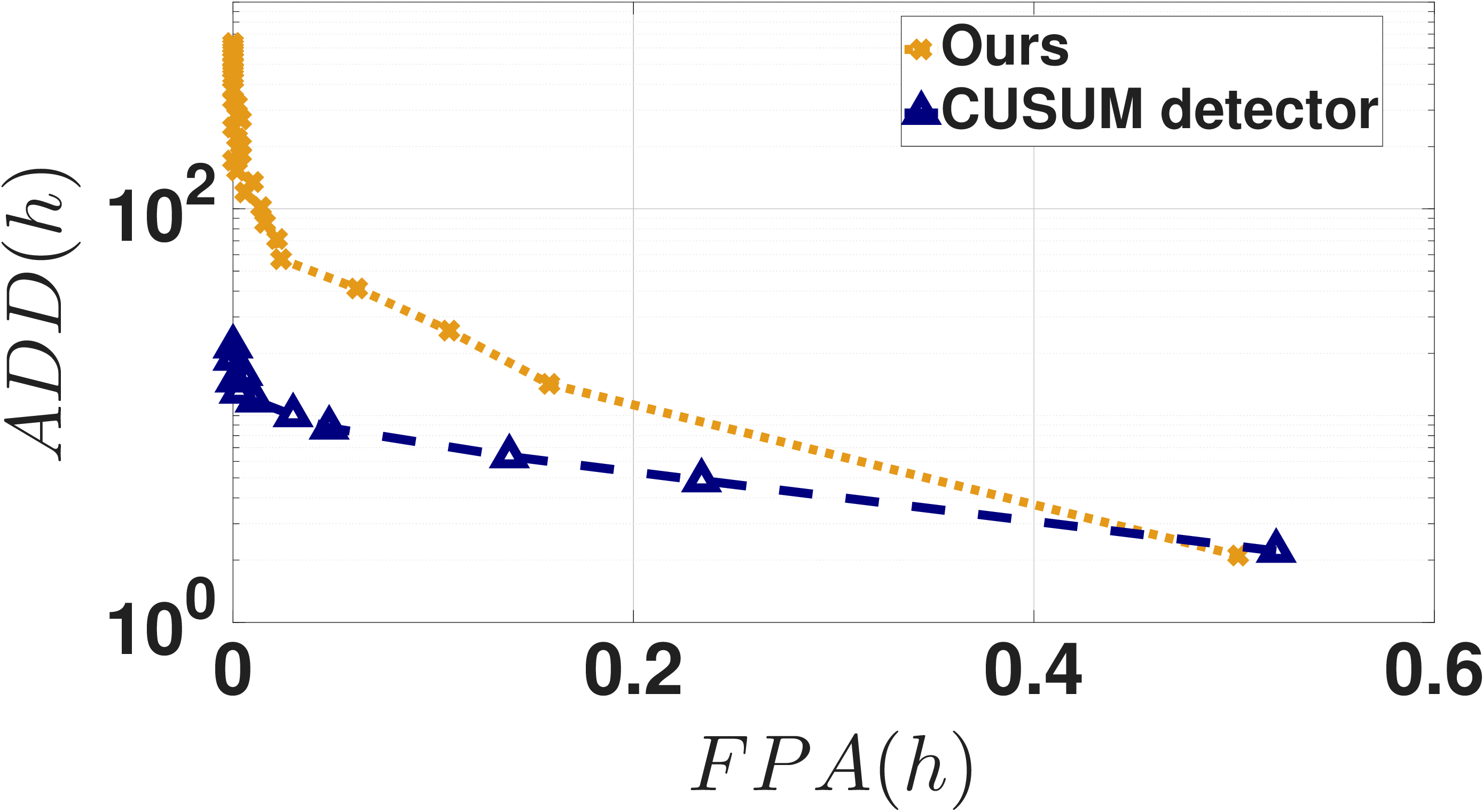}
    \label{subfig:gaus_att_25}
\end{subfigure} 
\begin{subfigure}{0.45\linewidth}
    \includegraphics[scale = 0.14]{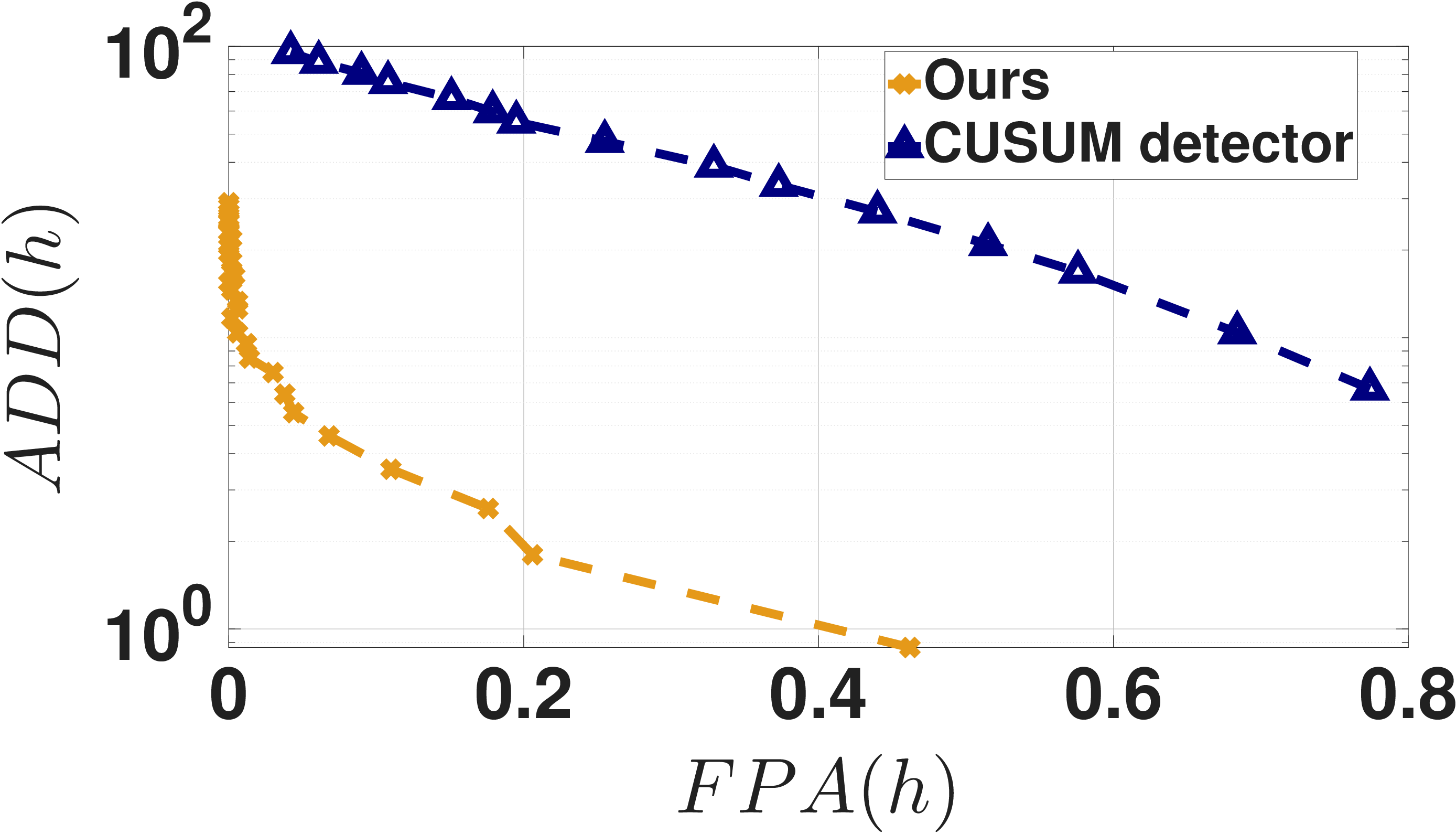}
    \label{subfig:gaus_att_15}
\end{subfigure} 
\begin{subfigure}{0.45\linewidth}
    \includegraphics[scale = 0.14]{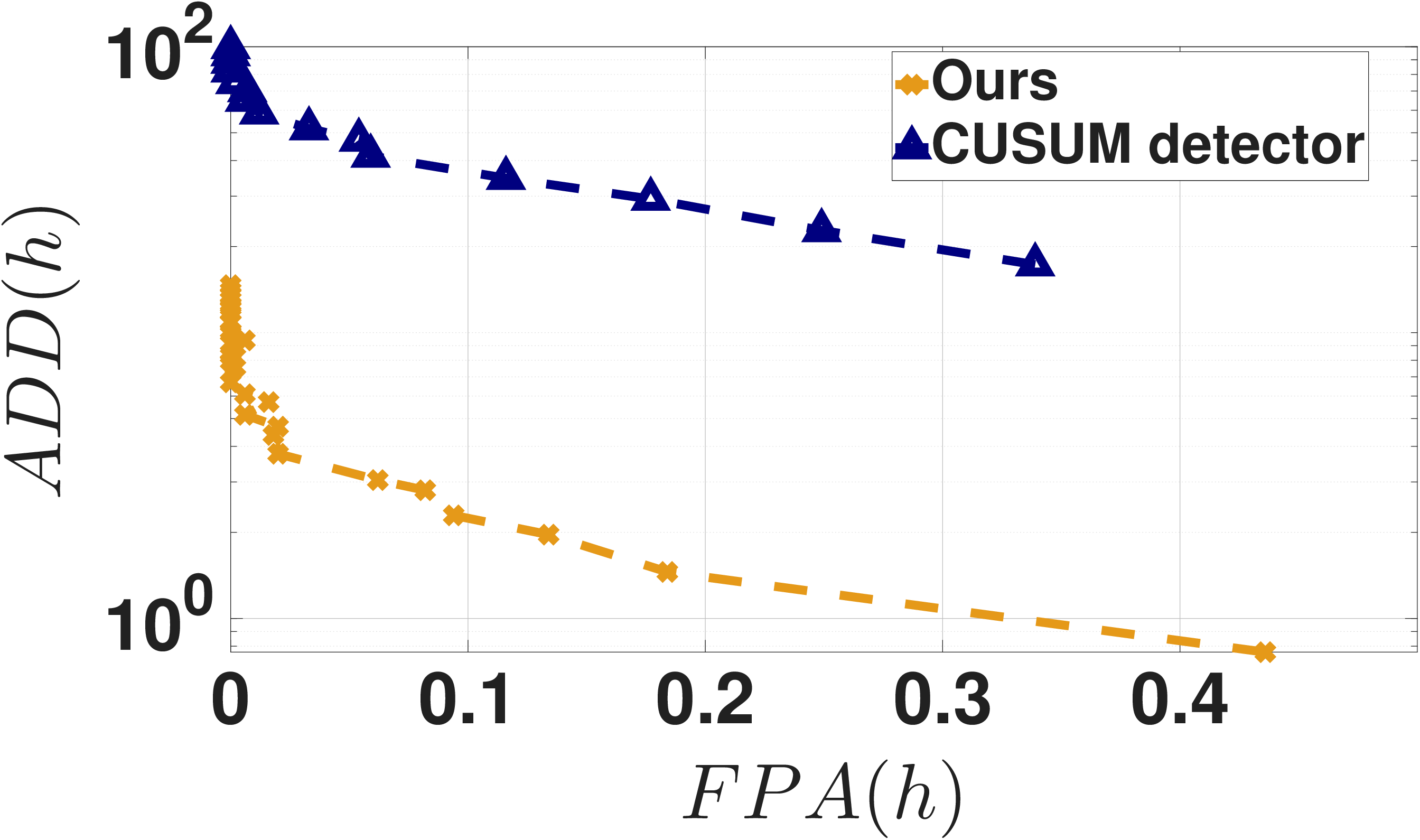}
    \label{subfig:gaus_att_25}
\end{subfigure} 
\caption{Comparison with \cite{ref:AN-AT-AA-SD-2023} for i.i.d. attacks $\widehat{v}_t\sim\normal(0,\attvar\identity{\dimout})$, with $\attvar=0.1,0.5,1.5,2.5$ shown from top-left to bottom-right.}
\label{fig:gaus_att}
\end{figure*}

\noindent\textbf{Performance metrics:} Let \(\tmatt\) denote the unknown but deterministic time when the attack was initiated, and \(\tmdet\) be the time of attack detection, defined by \(\tmdet \Let \inf \aset[\big]{t \in \Nz \suchthat S_t\ge h}\) for a fixed threshold \(h\). To choose a practical threshold \(h\), we estimate the false positive probability, given by \(\PP_1\bigl(\max_{1 \le s \le \horlen} \testsignal_{s}\ge h \bigr)\) finite \(\horlen\), empirically via conducting Monte Carlo simulations for different threshold values \(h\). The threshold corresponding to the desired false positive tolerance \(\eta\) is then selected for deployment. We considered the following standard metrics to assess the performance: 
\begin{itemize}[leftmargin=*, label = \(\circ\)]

    \item \emph{Average detection delay (ADD):} For a fixed threshold \(h\), ADD is defined by \(\ADD(h) \Let \EE_2 \cexpecof[\big]{\tmdet - \tmatt \given \tmdet \ge \tmatt},\) where the expectation is defined with respect to the probability measure under (H1), and \(h\) is the threshold.
    
    \item \emph{False alarm rate (FAR):} Fix \(h\), and compute FAR as \(\FAR(h) \Let \PP_2 \cprobof[\big]{\tmdet < \tmatt},\) where the probability measure is defined with respect to (H1).   
\end{itemize}
\vspace{1mm}
We examined \(\ADD\) with respect to \(\FAR\) for different values of \(h\), which reveals how quickly the attack is detected for a fixed \(\FAR(h)\). Monte-Carlo simulations were conducted to compute \(\ADD(\cdot)\) and \(\FAR(\cdot)\).
 
\noindent\textbf{Attack model:} We considered the deception attack \cite{ref:AN-AT-AA-SD-2023} described by \(v_{t} = A_a v_{t-1} + \widehat{v}_t\), where \(A_a  = \mathrm{diag}(4,2)\). The sequence \((\widehat{v}_t)_{t \in \Nz}\) refers to the uncertainties associated with the adversary. We considered the following two cases: \textbf{(a)} \((\widehat{v}_t)_{t \in \Nz} \sim \mathcal{N}(0, \Sigma_a)\) is a sequence of \(d_y\)-dimensional random vectors; \textbf{(b)} \((\widehat{v}_t)_{t \in \Nz} \sim \mathcal{N}(0, \Sigma_a)\) and further corrupted by the exponential distribution \(\mathrm{Exp}(\lambda)\), where \(\lambda>0\). It is assumed that during the attack, the adversary, with pre-defined \((A_a, \Sigma_a)\), may \emph{completely replace the true output signal} \(y\) with the corrupt data generated by \(v\). We fixed \(\tmatt = 250\) seconds.

\noindent\textbf{Parameters:}
We set \(\sigma = 0.5\) in \S\ref{subsec:kernel}. For reproducibility, we fixed the seed to be \(2\). In Fig.~\ref{fig:gaus_att} and \ref{fig:G_exp_att}-\ref{fig:on_support}, we chose \((\eps_1, \eps_2)\) to be \((0.001, 0.001)\) and \((0.001, 0.01)\), respectively. We set \((n_1, n_2)\) to be \((150,100)\) for all figures.

\noindent \textbf{Results and discussions:}
First, we assumed that \(E\dist_t \overset{\text{i.i.d.}}{\sim} \normal(0, 0.1 \identity{\dimst})\), \(F\dist_t \overset{\text{i.i.d.}}{\sim} \normal(0, 0.05 \identity{\dimout})\), and \(\widehat{v}_t \overset{\text{i.i.d.}}{\sim}\normal(0, \attvar \identity{\dimout})\) for every \(t \in \Nz\) and \(\attvar \in \aset[\big]{0.1, 0.5, 1.5, 2.5}\). Fig.~\ref{fig:gaus_att} summarizes the robust performance of our detector against \((v_t)_{t \in \Nz}\) generated by an adversary driven by Gaussian noise. 
We observe that for small values of \(\attvar\), the \(ADD\) of our detector approaches  \cite{ref:AN-AT-AA-SD-2023}. This is because the CUSUM-based detector is optimal for Gaussian noise and attack process \(v\). However, as \(\attvar\) increases further, we observe that the \(\ADD\) of our detector is smaller, indicating a \emph{faster} response to attack-induced change points. 


Second, we tested the robust performance of our detector when  \(\widehat{v}_t \overset{\text{i.i.d.}}{\sim}\normal(0, 0.05 \identity{\dimout}) + \text{Exp}(\lambda)\) for each \(t\in \Nz\), which is non-Gaussian. Here \(\lambda \in \aset[]{0.5, 1.5}\), fixing all other parameters. Fig.~\ref{fig:G_exp_att} reveals that as we increase \(\lambda\), which amplifies the non-Gaussian component, our detector clearly outperforms the CUSUM detector in terms of quickly responding to adversary-induced changes for various \(\FAR\), supporting that \(\algoname\) is distributionally robust.
\begin{figure}[h]
\begin{subfigure}{\linewidth}
\centering
    \includegraphics[scale = 0.14]{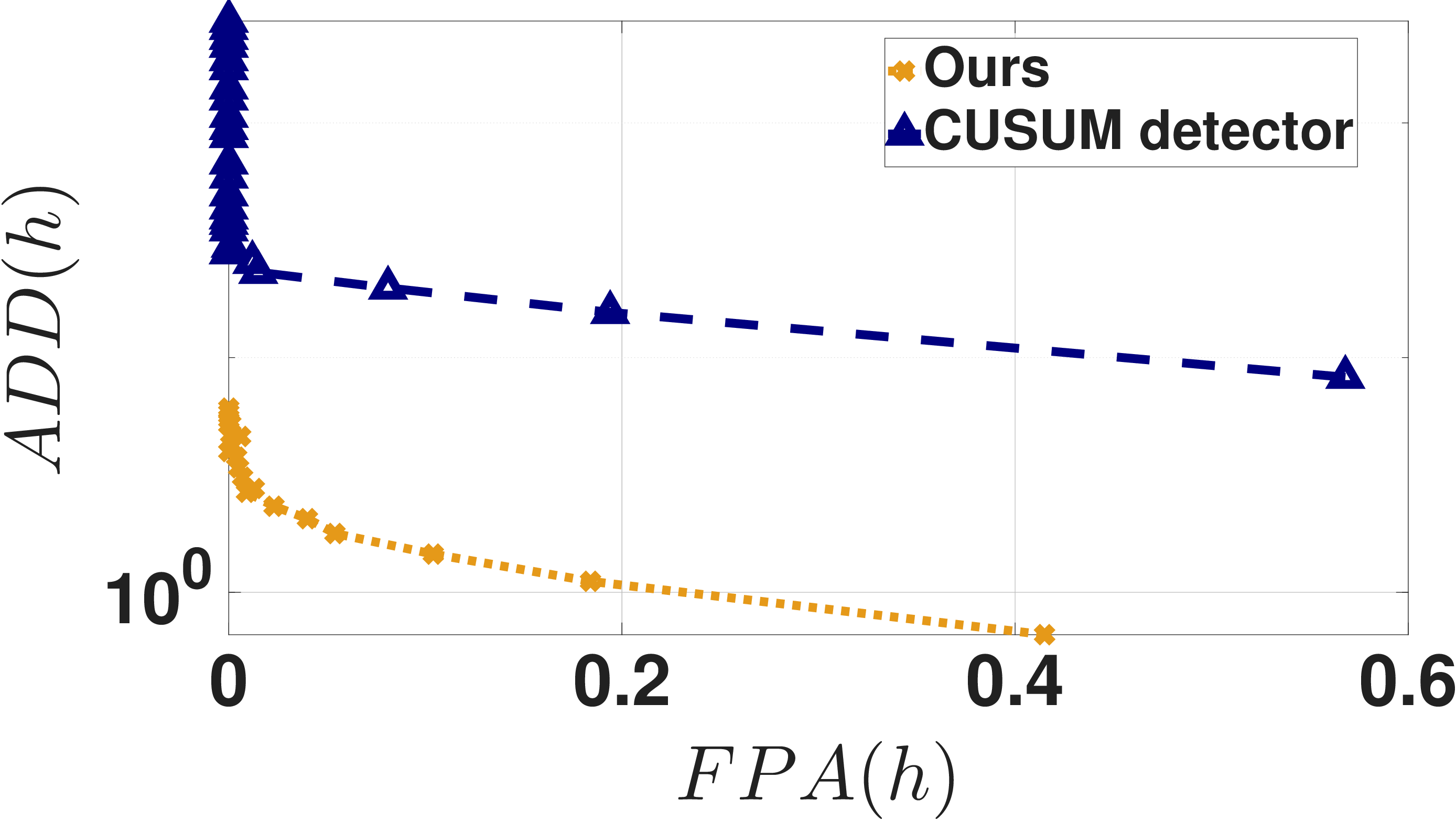}
    \label{subfig:G_exp_att_15}
\end{subfigure} 
\begin{subfigure}{\linewidth}
\centering
    \includegraphics[scale = 0.14]{Figures/new_figures/G_exp_05.eps}
    \label{subfig:G_exp_att_25}
\end{subfigure} 
\caption{Comparison with \cite{ref:AN-AT-AA-SD-2023}, when \(\widehat{v}_t \overset{\text{i.i.d.}}{\sim}\normal(0, \attvar \identity{\dimout}) + \text{Exp}(\lambda)\) for \(\lambda = 0.5\) (left) and \(\lambda = 1.5\) (right).}
\label{fig:G_exp_att}
\end{figure}
%
Figure \ref{fig:on_support} depicts the on-support performance of the detector defined in \eqref{eq:on:supp:test}. The left figure represents the on-support distributions evaluated on \(s_{\ell}\), while the right figure represents the on-support test \(\test^{\star}(\cdot)\) defined in \eqref{eq:on:supp:test}. It is observed that \(P_1^{\star}\) and \(P_2^{\star}\) assign higher probability mass to the non-attacked and attacked samples, respectively. 
\begin{figure}[h]
\begin{subfigure}{\linewidth}
\centering
    \includegraphics[scale = 0.18]{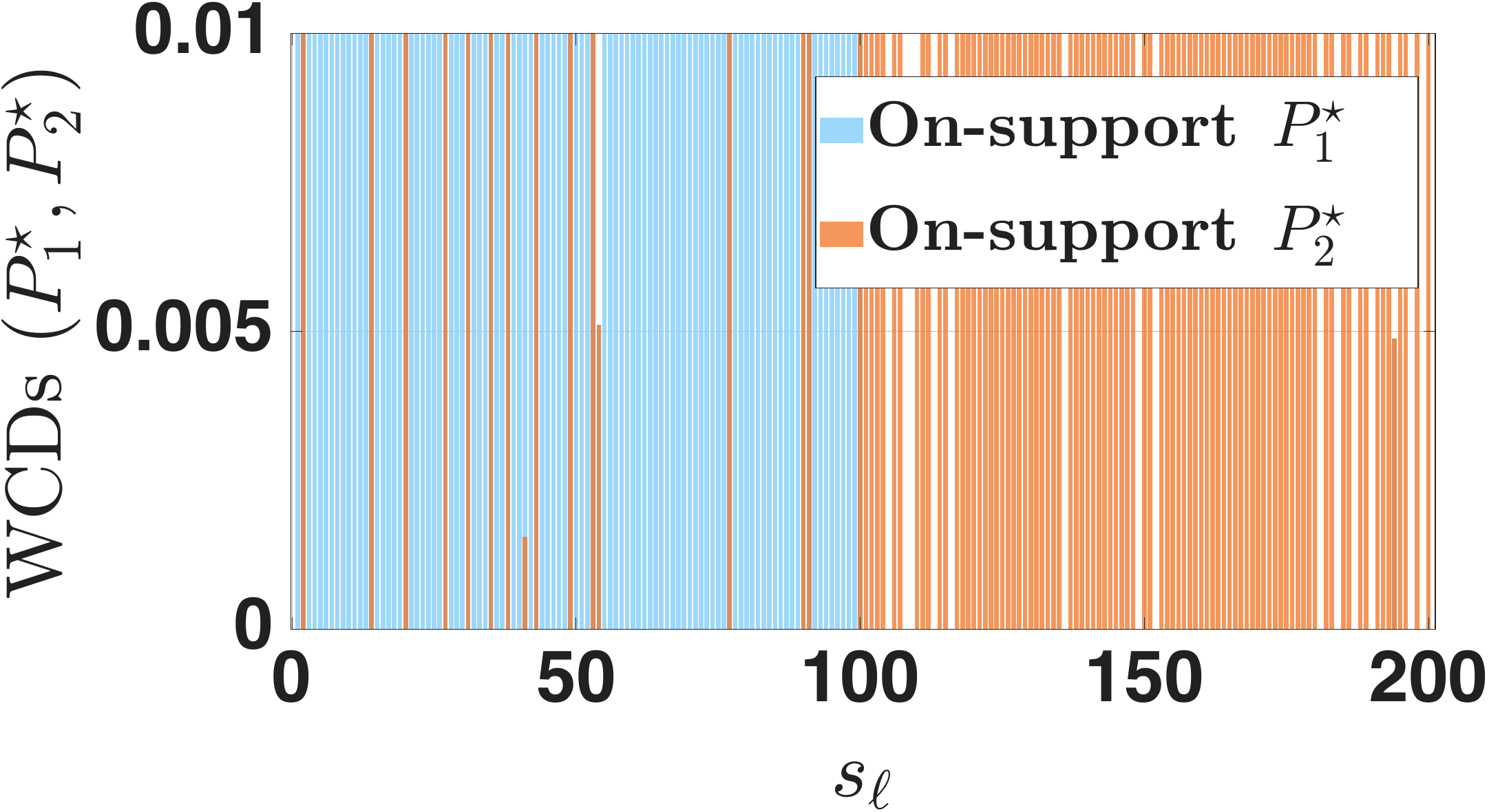}
    \label{subfig:wcds}
\end{subfigure} 
\begin{subfigure}{\linewidth}
\centering
   \hspace{3mm} \includegraphics[scale = 0.17]{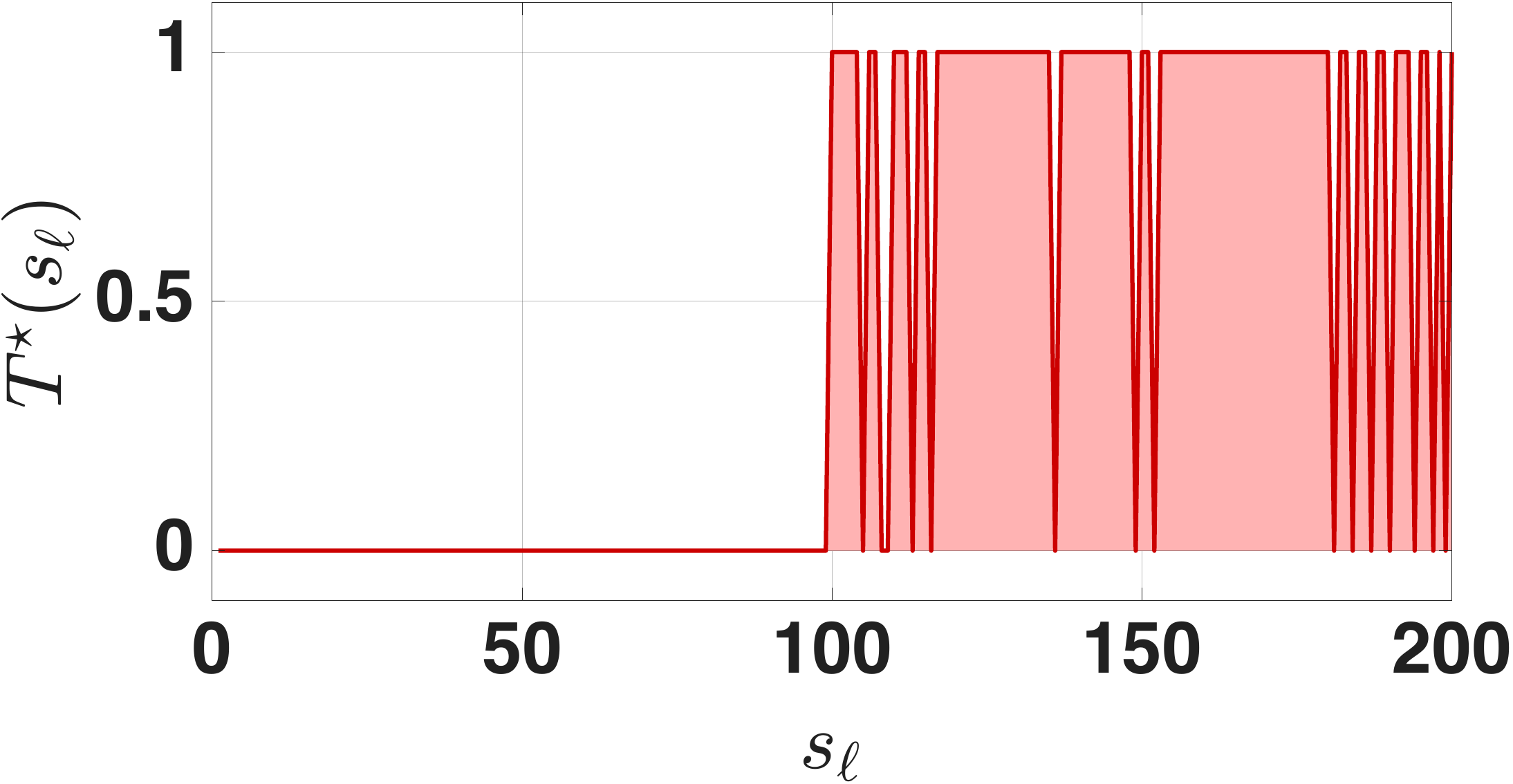}
    \label{subfig:on-support test}
\end{subfigure} 
\caption{On-support WCDs (left)  and \(\test^{\star}(\cdot)\) (right) defined in \eqref{eq:on:supp:test}, when \(\widehat{v}_t \overset{\text{i.i.d.}}{\sim}\normal(0, \attvar \identity{\dimout}) + \text{Exp}(\lambda)\) for \(\lambda = 0.5\). }
\label{fig:on_support}
\end{figure}

To verify Assumption \ref{assum:blanket}, we computed \(\Wdist_1(Q_1,Q_2)\) for every pair of nominal and attacked training datasets used in the numerical experiments; the values are provided in Tables \ref{tab:W_distance_gauss} and \ref{tab:W_distance_nongauss}, which corresponds to Figures \ref{fig:G_exp_att} and \ref{fig:gaus_att}, respectively.  

\begin{table}[ht]
\centering
\begin{subtable}[t]{0.48\columnwidth}
\centering
\begin{tabular}{|c|c|}
\hline
\(\sigma_{\text{a}}\) & \(\Wdist_1(Q_1,Q_2)\) \\ \hline
\(0.1\) & 0.4309 \\ \hline
\(0.5\) & 0.5269 \\ \hline
\(1.5\) & 0.7679 \\ \hline
\(2.5\) & 0.9549 \\ \hline
\end{tabular}
\caption{\(\Wdist_1\)-distance for different values of \(\sigma_{\text{a}}\). For every \(\sigma_{\text{a}}\), \(\eps_1+\eps_2=0.002<\Wdist_1(Q_1,Q_2)\), suggesting that the ambiguity sets are disjoint.}
\label{tab:W_distance_gauss}
\end{subtable}
\hfill
\begin{subtable}[t]{0.48\columnwidth}
\centering
\begin{tabular}{|c|c|}
\hline
\(\lambda\) & \(\Wdist_1(Q_1,Q_2)\) \\ \hline
\(0.5\) & 1.7982 \\ \hline
\(1.5\) & 0.6317 \\ \hline
\end{tabular}
\caption{\(\Wdist_1\)-distance for different values of \(\lambda\) reported in Figure~\red{2}. For every \(\lambda\), \(\eps_1+\eps_2=0.011<\Wdist_1(Q_1,Q_2)\), suggesting that the ambiguity sets are disjoint.}
\label{tab:W_distance_nongauss}
\end{subtable}
\end{table}


\section{Discussion and concluding remarks}
This article presents \(\algoname\), an OT-driven framework for attack detection in CPS. While it relaxes two important restrictions encountered in the literature: attack detection in the Gaussian regime, and the need for attack-model- or policy-specific detection mechanisms, the current exposition provides an \emph{a posteriori} analysis relying on data from both the attacked and non-attacked regimes, and only serves as an initial step toward developing a general framework for the \emph{online detection} of more sophisticated attacks. 

\textbf{Possible applications:} \(\algoname\) can be implemented as a supervisory monitoring layer in partially observed CPSs that already employ an observer or state estimator, such as industrial process-control systems, water-distribution and tank processes, power-system monitoring, and networked transportation or manufacturing systems. 
In these settings, the detector operates directly on the residual sequence produced by the existing observer and therefore does not require modification of the underlying controller or estimator. 
Nominal and representative attacked residual data may be collected from historical operation, controlled experiments, hardware-in-the-loop tests, etc. The Wasserstein-based LP is then solved once during the offline training stage to obtain the WCDs.

\textbf{Next steps:} An immediate direction for future work is to extend the detection mechanism to an online setting while retaining its robustness properties. Furthermore, conducting a more detailed numerical study of various classes of attacks in the non-Gaussian regime is also part of our future research plan.

\section*{Acknowledgment}
\noindent The authors thank Ashwin Aravind, Fujitsu Development Centre, Kawasaki, Japan, for his assistance with some of the simulation results presented in this manuscript. Internal models of ChatGPT and Claude, were used for editing and grammar correction, as well as for generating some supporting helper functions for plotting the figures. The authors take full responsibility for the content of the paper.

\bibliographystyle{ieeetr}
\bibliography{refs}
\newpage

\onecolumn

\appendices
\section{Additional numerical details}
\label{Sec-appen:sensitivity}

We provided some additional background on how to select the hyper parameters \(\sigma\) (the kernel bandwidth) and the pair \((\eps_1,\eps_2)\) (the ambiguity set radii). 

\subsection{Guideline for choosing the kernel bandwidth \(\sigma\)}
 The kernel bandwidth controls the bias–variance tradeoff of the kernel density estimators \(\bigl(f_1(\cdot), f_2(\cdot)\bigr)\).
 It affects how accurately the smoothed densities approximate the worst-case distributions \(\PP_1^{\star}\) and \(\PP_1^{\star}\), respectively.
 In other words, the \emph{kernel bandwidth is directly linked to the generalization property of the detector beyond the training data to unseen residual samples}.
 For a practical guideline:
    \begin{itemize}[leftmargin=*, label = \(\circ\)]
        \item excessively small bandwidths may overfit the training data;

        \item excessively large bandwidths may blur the distinction between the nominal and attacked distributions.
    \end{itemize}
    \vspace{2mm}
    In our experiments, the bandwidth \emph{was chosen to balance the on-sample and off-sample detection performance}.
    Moreover, we fixed it across all experiments to facilitate consistent comparisons across different datasets, considering both Gaussian and non-Gaussian noise settings, as detailed in \S\ref{sec:num_exp} of the revised manuscript. However, one may also adopt more principled approaches, such as \cite{ref:SS-MCJ-91,ref:MPW-MCJ-95}, for selecting the bandwidth. 
    
    \vspace{2mm}
    
    We conducted additional experiments to analyze the effect of \(\sigma\) on the detector's performance.
    For these experiment, the data provided in Table \ref{tab:kernel_bandwidth_nongauss} below,  we fixed all parameter as in the manuscript, and conducted \(150\) Monte-Carlo simulations to compute the \(\ADD\) and \(\FAR\).
    We observed that for each threshold \(h\), increasing \(\sigma\) leads to increase in \(\ADD\) and consequently \(\FAR\) decreases, verifying that the detection performance is sensitive to \(\sigma\).
    This also means that as \(\sigma\) increased, the detection algorithm fails to distinguish between \(f_1(\cdot)\) and \(f_2(\cdot)\). Note that a moderate value of \(\sigma\) (which is \(0.5\) in our case) results in better generalization. 
    On the other hand, smaller values of \(\sigma\) will simply yields a detector that is overfit to the training data.
    
    \vspace{2mm}
    
    Note that \(\ADD(h)=0\) and \(\FAR(h) = 1\) denotes the failure of the detector due to \(h\) being too small. On the other hand, \(\FAR(h) = 0\) indicates that the threshold is very high, which also explain the corresponding high values of \(\ADD(h)\). 
\begin{table}[htpb]
    \centering
    \begin{adjustbox}{max width=\linewidth}
    \begin{tblr}{
      colspec = {c|c|cc|cc|cc|cc},
      row{1,2} = {azure9},
      hline{1,3,Z} = {2pt},
      hline{2} = {1pt},
      column{1-10} = {c},
    }

    & &
    \SetCell[c=2]{c} $\sigma_{\text{a}} = 0.5$ & &
    \SetCell[c=2]{c} $\sigma_{\text{a}} = 2.5$ & &
    \SetCell[c=2]{c} $\lambda = 0.5$ & &
    \SetCell[c=2]{c} $\lambda = 1.5$ & \\

    $h$ & $\sigma$ &
    $\ADD(h)$ & $\FAR(h)$ &
    $\ADD(h)$ & $\FAR(h)$ &
    $\ADD(h)$ & $\FAR(h)$ &
    $\ADD(h)$ & $\FAR(h)$ \\

    $10$ & $0.1$ &
    0 & 1 & 0 & 1 & 0 & 1 & 0 & 1 \\

    \SetRow{bg=lightgray}
    & $0.5$ &
    46.41 & 0.06 & 2.25 & 0.1 & 1.22 & 0.13 & 19.74 & 0.08 \\

    & $1$ &
    742.5 & 0.01 & 10.49 & 0.01 & 3.21 & 0 & 536.34 & 0 \\

    \hline[1pt]

    \SetCell[r=3]{c,valign=m} $50$ & $0.1$ &
    0.07 & 0.97 & 0.15 & 0.88 & 0.16 & 0.87 & 0.04 & 0.99 \\

    \SetRow{bg=lightgray}
    & $0.5$ &
    279.23 & 0 & 7.91 & 0 & 3.15 & 0 & 87.17 & 0 \\

    & $1$ &
    750 & 0 & 48.34 & 0 & 14.41 & 0 & 686.32 & 0 \\

    \hline[1pt]

    \SetCell[r=3]{c,valign=m} $80$ & $0.1$ &
    0.82 & 0.87 & 0.58 & 0.6 & 0.43 & 0.63 & 0.59 & 0.85 \\

    \SetRow{bg=lightgray}
    & $0.5$ &
    521 & 0 & 11.01 & 0 & 5.26 & 0 & 142.06 & 0 \\

    & $1$ &
    750 & 0 & 75.13 & 0 & 21.85 & 0 & 718.16 & 0 \\

    \end{tblr}
    \end{adjustbox}

    \vspace{0.7mm}
    \caption{
    $\ADD$ and $\FAR$ for different values of the kernel bandwidth $\sigma$
    for both Gaussian attacks (columns 3-6) and non-Gaussian attacks (columns 7-10), corresponding to Figures \ref{fig:gaus_att} and  \ref{fig:G_exp_att}, respectively.
    }
    \label{tab:kernel_bandwidth_nongauss}
\end{table}

\subsection{Guideline for choosing the radii \((\eps_1, \eps_2)\)}
\label{subsec-appen:choosing radii}
Note that the radii \((\eps_1, \eps_2)\) must be selected such that the well-posedness condition in Assumption \ref{assum:blanket} \(\eps_1 + \eps_2 < \Wdist(Q_1, Q_2)\) remains valid to ensure that the two ambiguity sets remain disjoint, reflecting the desired level of robustness against distributional uncertainty. 
In particular, for this condition:
    \begin{itemize}
        \item larger radii increase robustness against distributional uncertainty and consequently, result in a more conservative design;

        \item while smaller radii provide a less conservative detector but decrease the robustness of the design, as well.
    \end{itemize}
    \vspace{2mm}
    To demonstrate the effect of varying \((\eps_1, \eps_2)\) on the robustness performance of \(\algoname\), we conducted additional experiments.
    %
    To that end, we varied the radii \(\eps_1, \eps_2\) in  \(\aset[]{0.001, 0.01, 0.1, 1}\) for \(h\) taking values in \(\aset[]{5, 10, 50}\), and ran \(150\) Monte-Carlo simulations.
    We kept all other parameters unchanged, as reported in \S\ref{sec:num_exp} of the revised manuscript.

    The heatmaps, Figures \ref{fig:sensitivity_radii} and \ref{fig:sensitivity_radii_NG}, summarize how \((\eps_1, \eps_2)\) influences the detector's performance.
    Two observations are worth noting here: First, we report that larger values of \(\eps_1\) and \(\eps_2\) violate Assumption \ref{assum:blanket}.
    Consequently, for these values, \(\algoname\)'s performance becomes conservative, and it fails to perform and detect adversaries successfully, as indicated by extremely high \(\ADD\) and low \(\FAR\).
    And finally, for relatively smaller values of \(\eps_1\) and \(\eps_2\) satisfying Assumption \ref{assum:blanket}, \(\algoname\) detects adversaries (both in the Gaussian regime and in the non-Gaussian regime) and is in fact robust to variations in \(\eps_1\) and \(\eps_2\) to a large extent, as evident from figures \ref{fig:sensitivity_radii} and \ref{fig:sensitivity_radii_NG}.        
    \begin{figure}[!htpb]
    \centering

    \begin{subfigure}{\columnwidth}
        \centering
        \includegraphics[scale = 0.22]{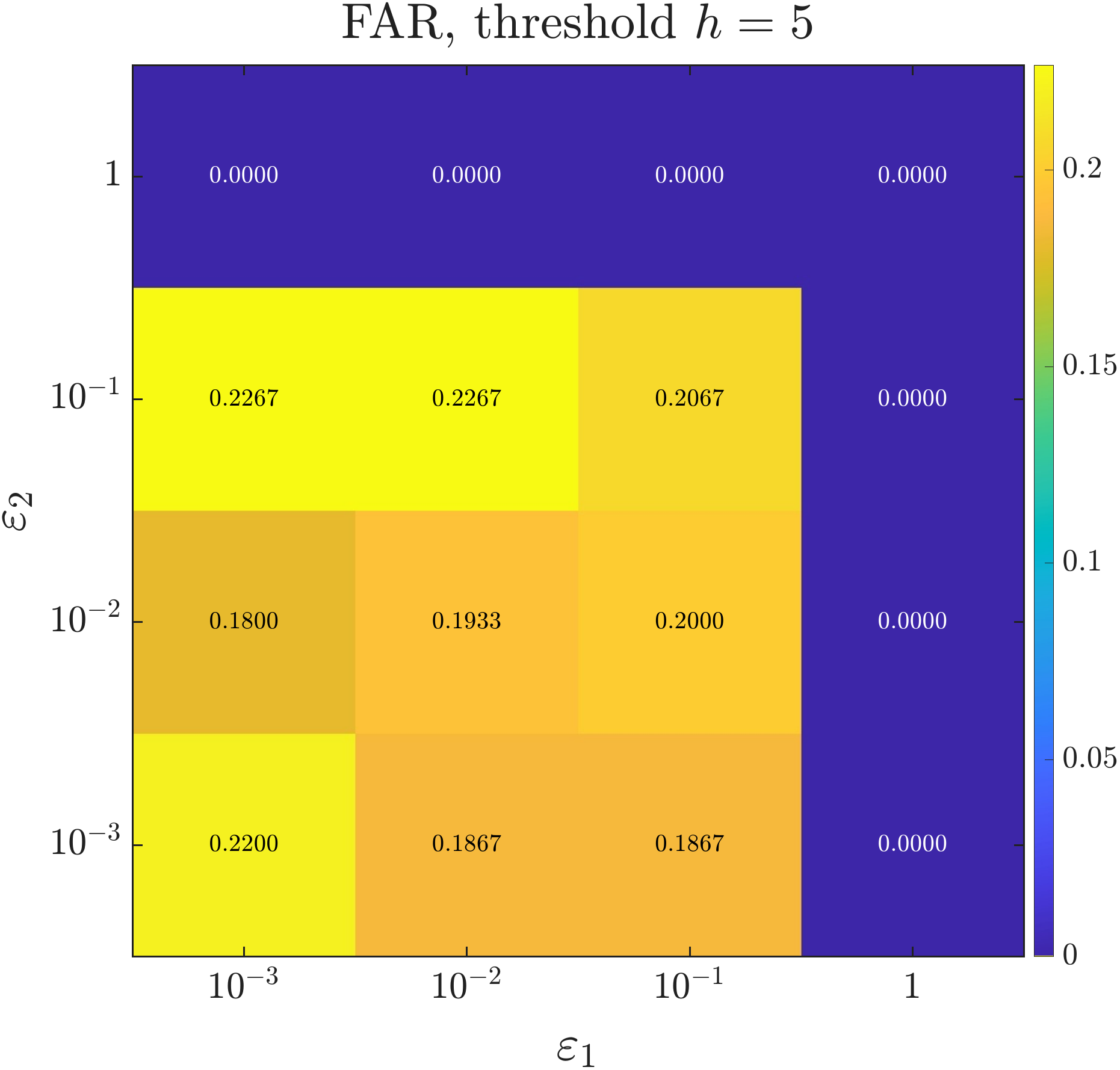}
        \includegraphics[scale = 0.22]{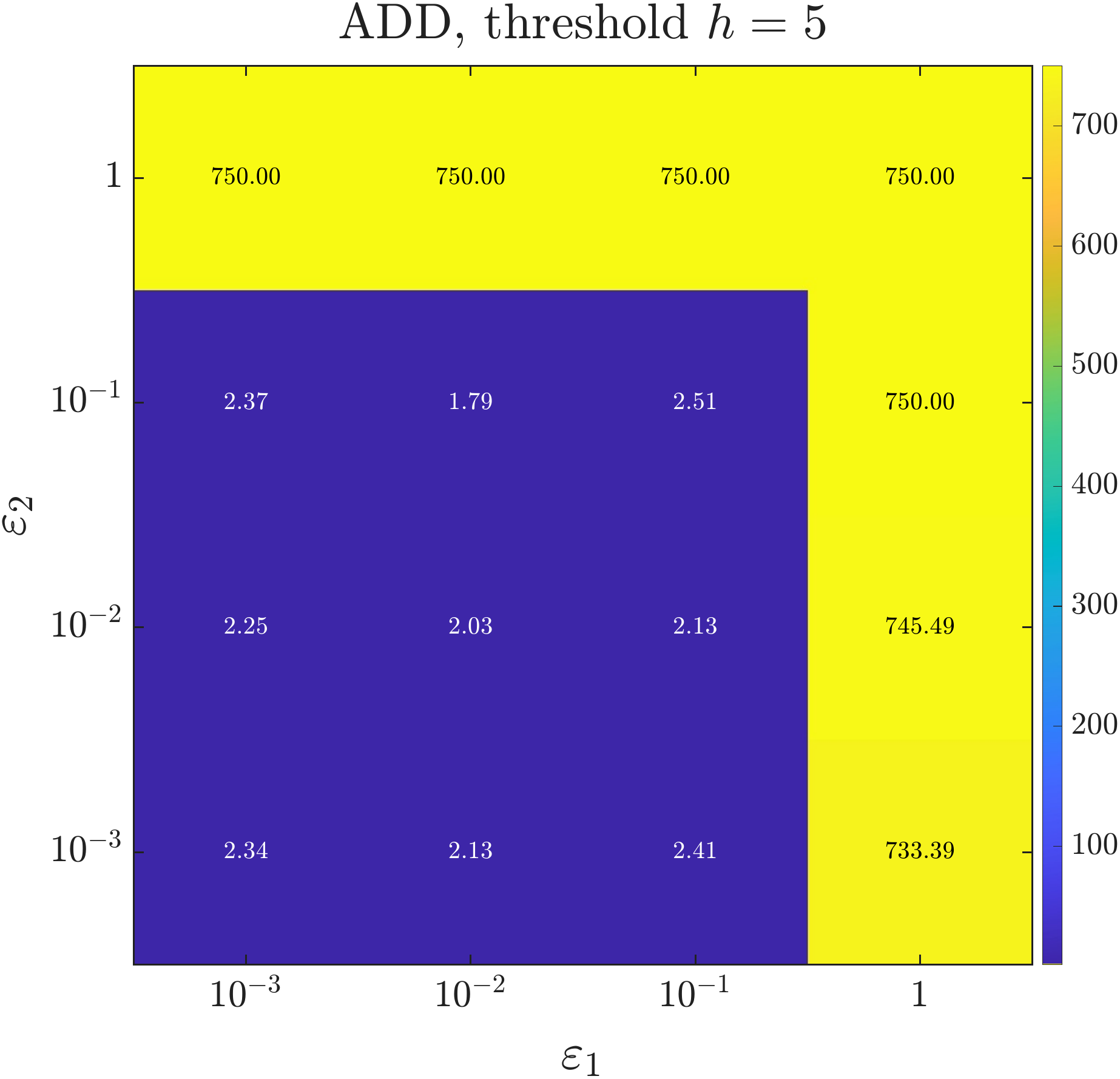}
        \caption{}
        \label{subfig:h_5}
    \end{subfigure}

    \vspace{2mm}

    \begin{subfigure}{\columnwidth}
        \centering
        \includegraphics[scale = 0.22]{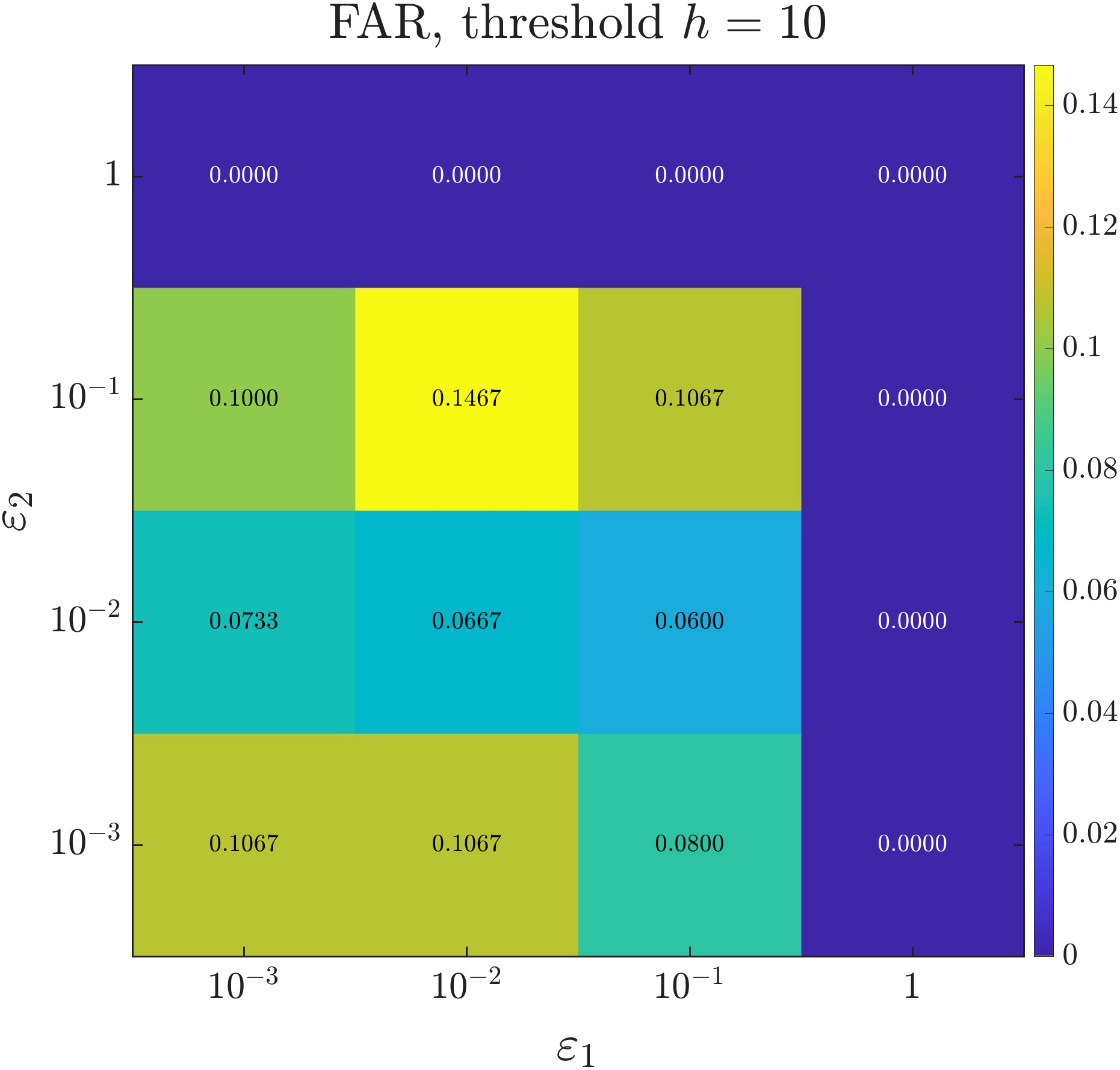}
        \includegraphics[scale = 0.22]{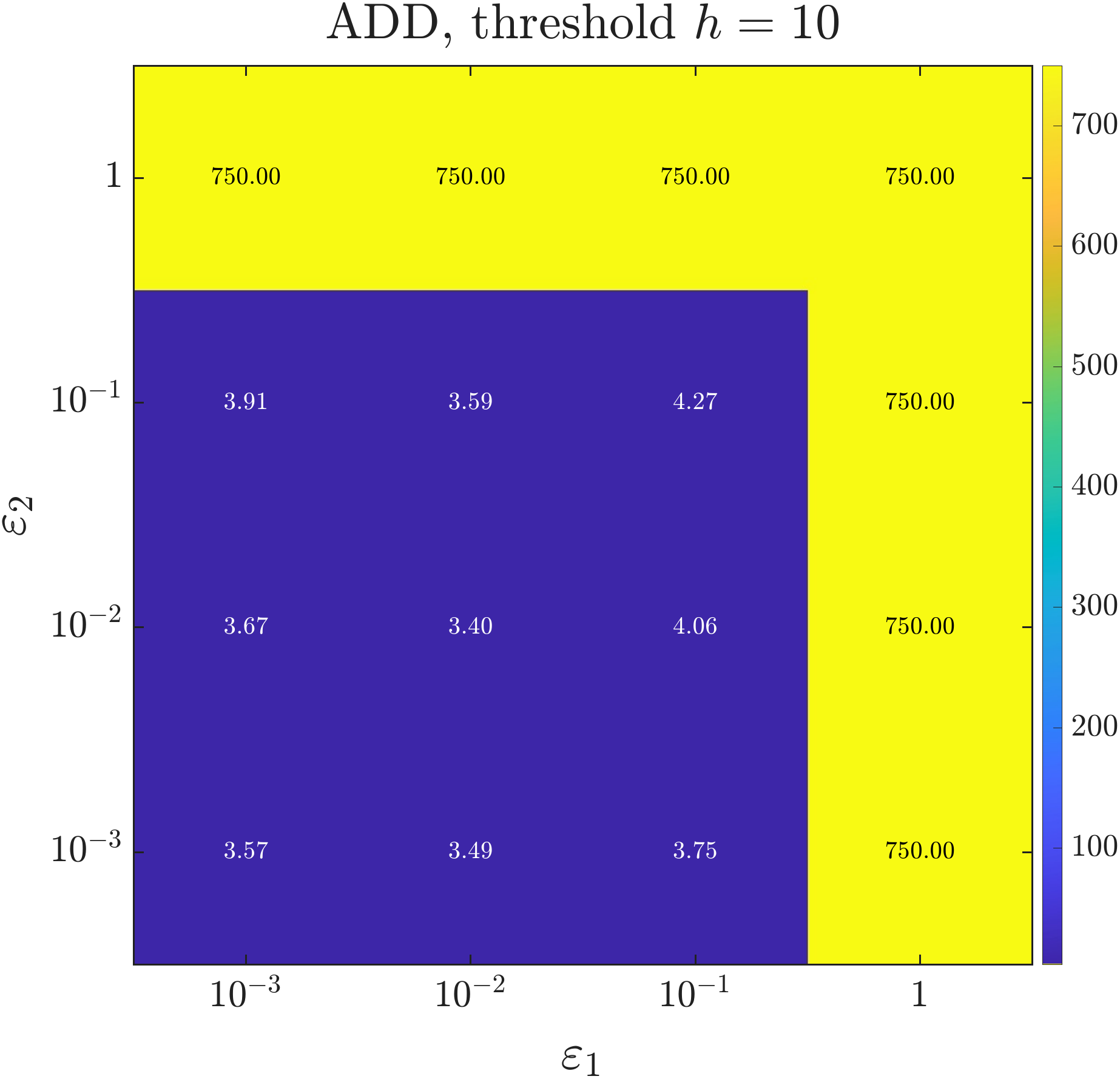}
        \caption{}
        \label{subfig:h_10}
    \end{subfigure}

    \vspace{2mm}

    \begin{subfigure}{\columnwidth}
        \centering
        \includegraphics[scale = 0.22]{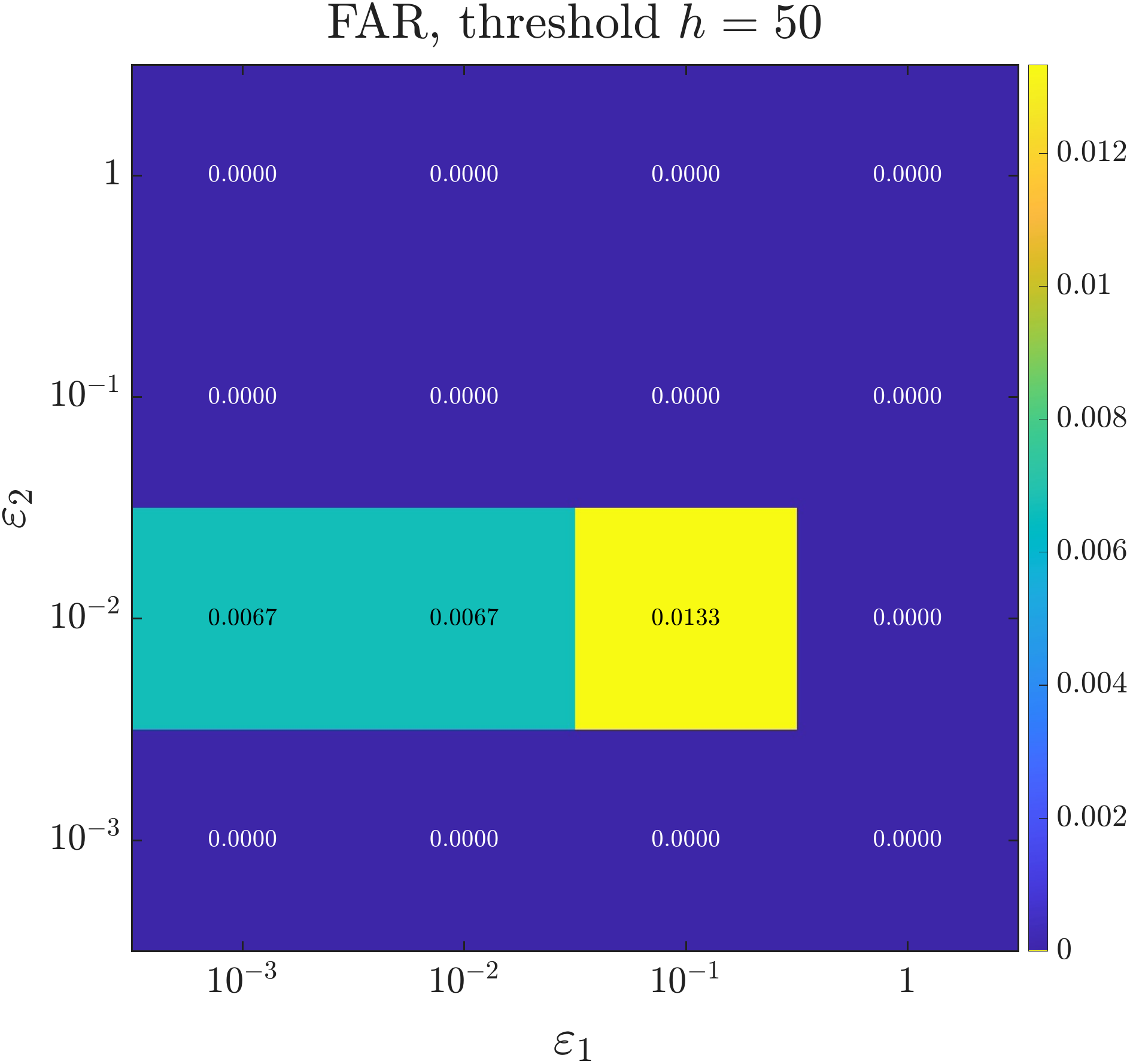}
        \includegraphics[scale = 0.22]{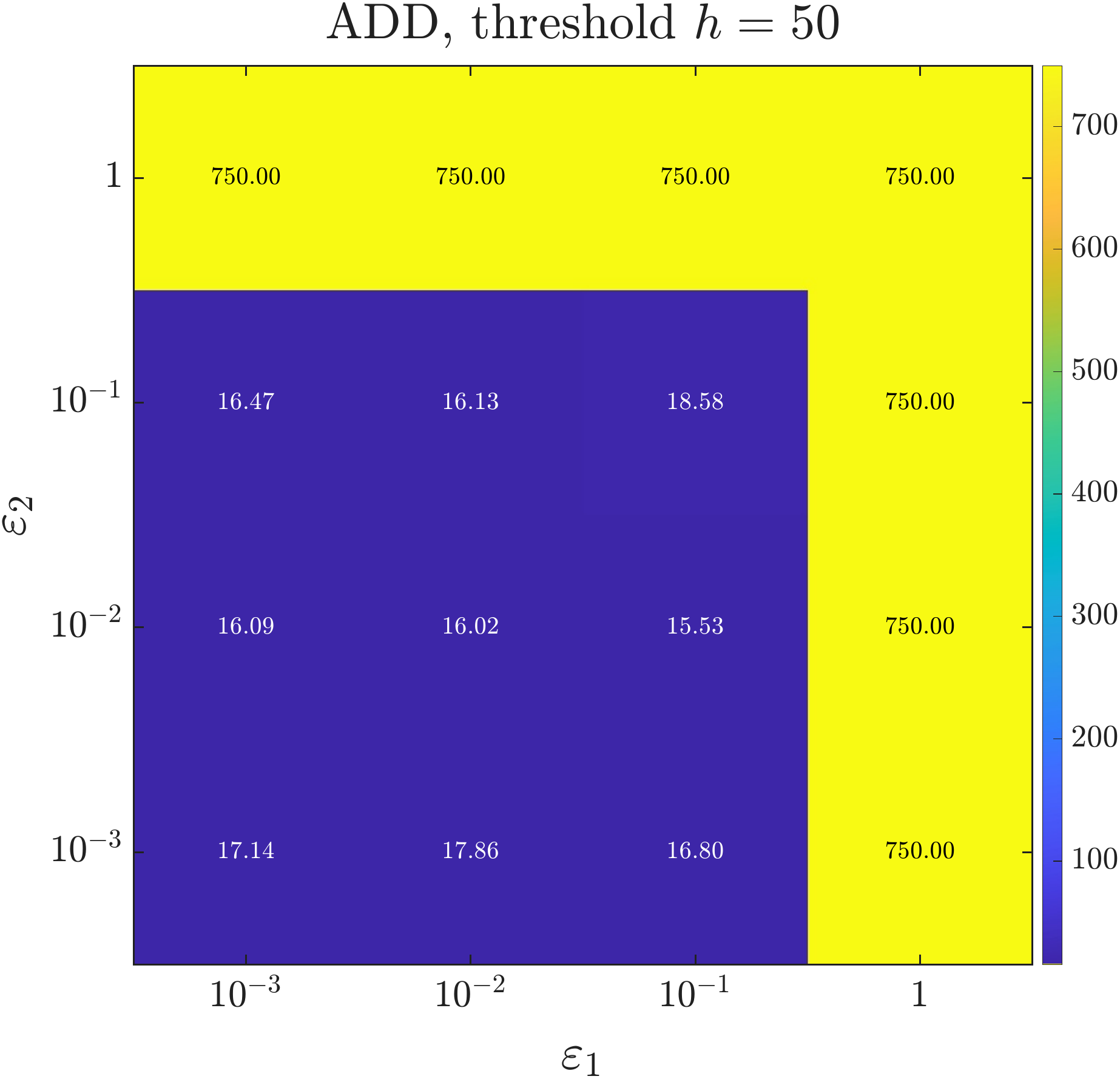}
        \caption{}
        \label{subfig:h_50}
    \end{subfigure}

    \caption{Heatmaps of \(\FAR(h)\) and \(\ADD(h)\) for \(h \in \aset[]{5,10,50}\), plotted against different values of the radii \(\eps_1\) and \(\eps_2\) under Gaussian attacks (\(\sigma_{\text{a}} = 1.5\)).}
    \label{fig:sensitivity_radii}
\end{figure}

    \begin{figure}[!htpb]
    \centering

    \begin{subfigure}{\columnwidth}
        \centering
        \includegraphics[scale = 0.22]{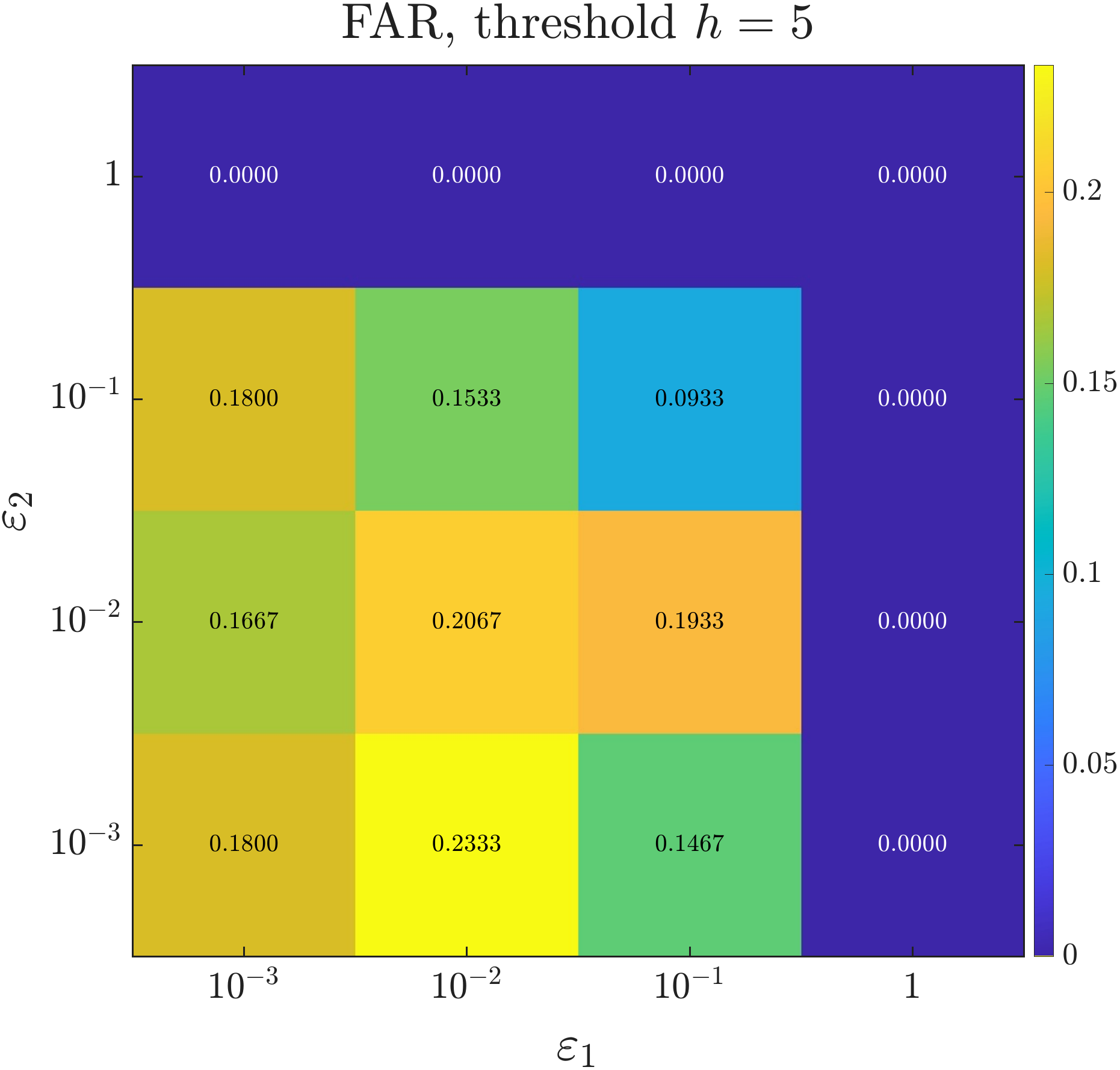}
        \includegraphics[scale = 0.22]{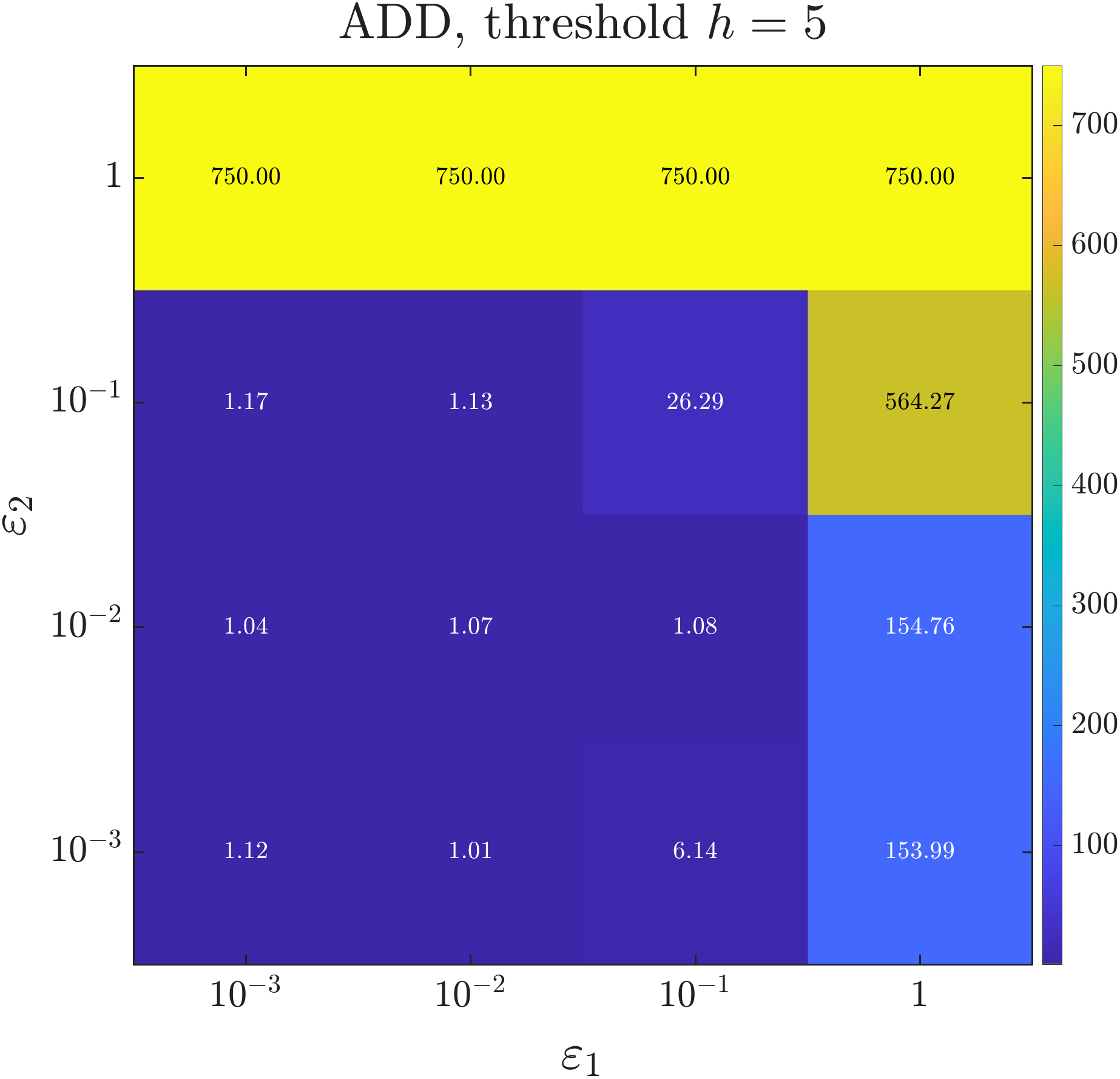}
        \caption{}
        \label{subfig:h_5_}
    \end{subfigure}

    \vspace{2mm}

    \begin{subfigure}{\columnwidth}
        \centering
        \includegraphics[scale = 0.22]{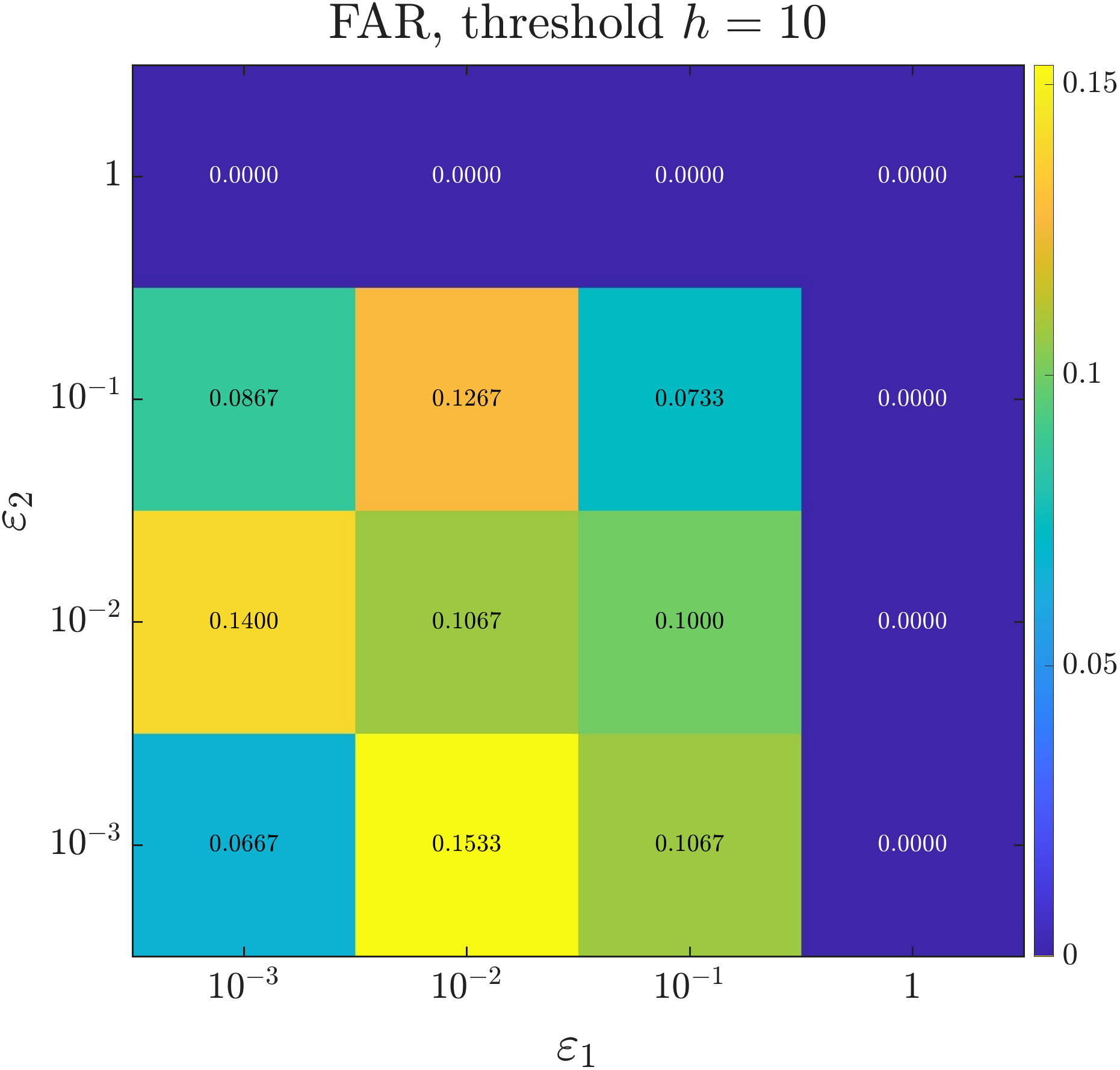}
        \includegraphics[scale = 0.22]{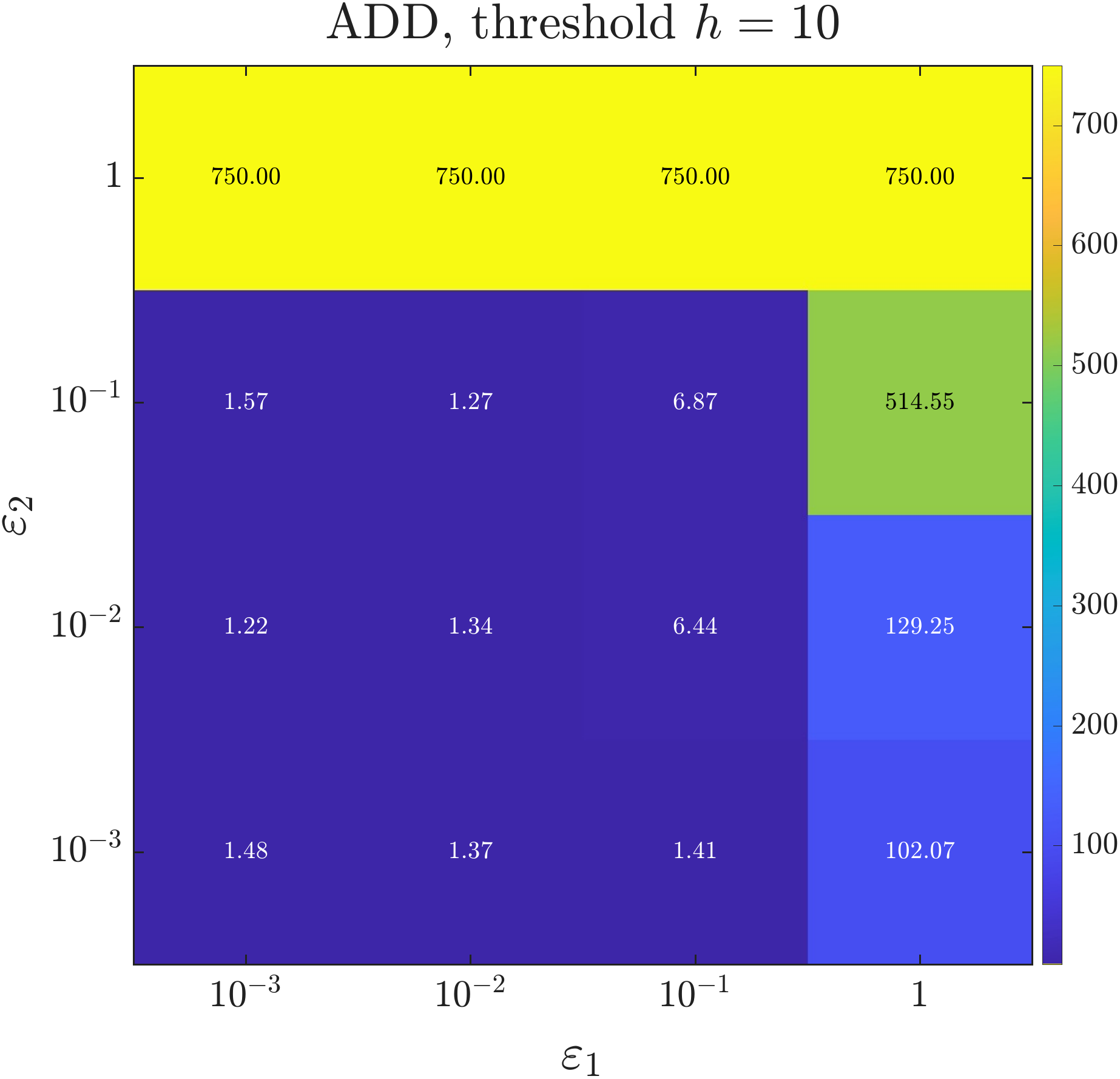}
        \caption{}
        \label{subfig:h_10_}
    \end{subfigure}

    \vspace{2mm}

    \begin{subfigure}{\columnwidth}
        \centering
        \includegraphics[scale = 0.22]{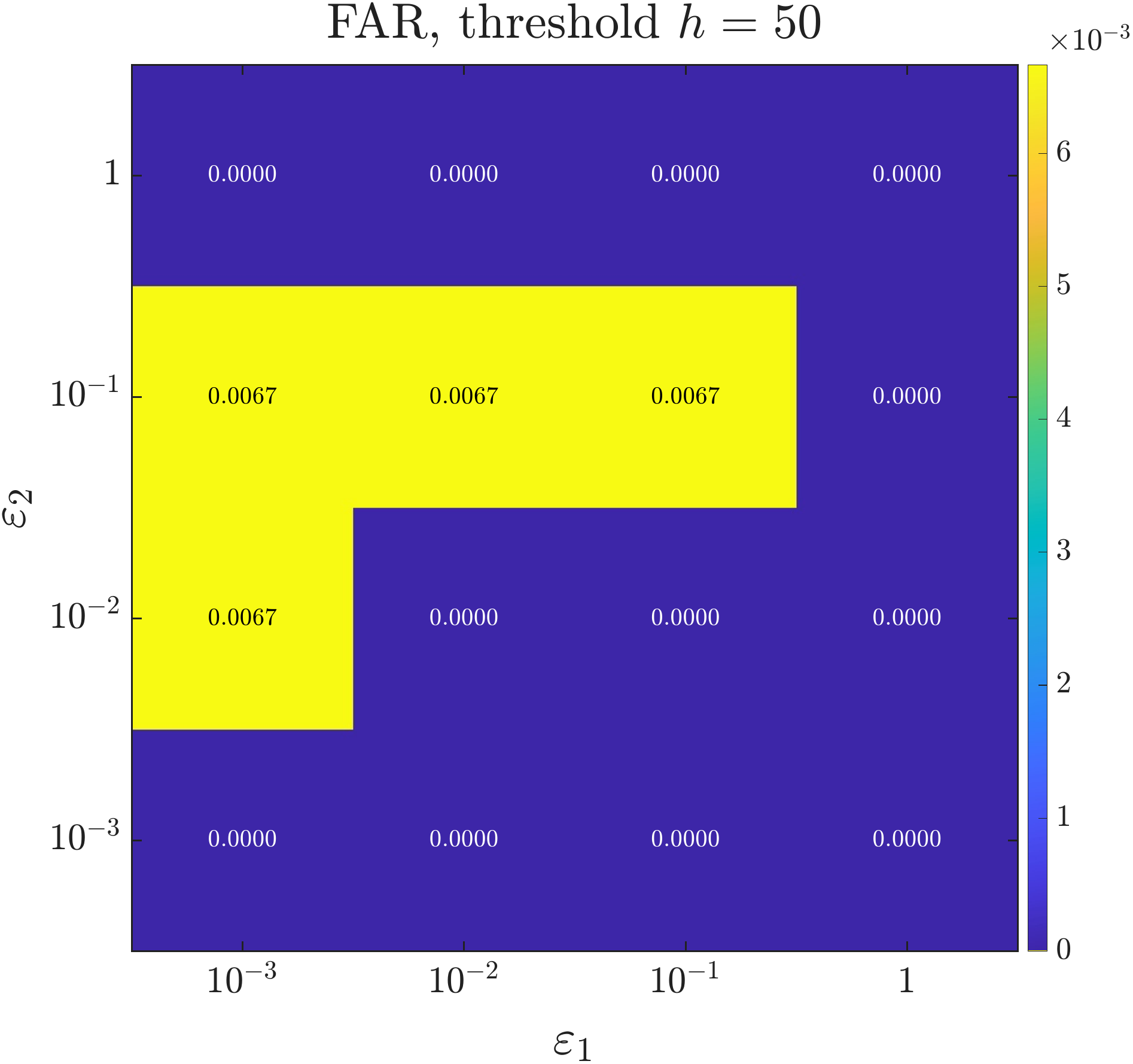}
        \includegraphics[scale = 0.22]{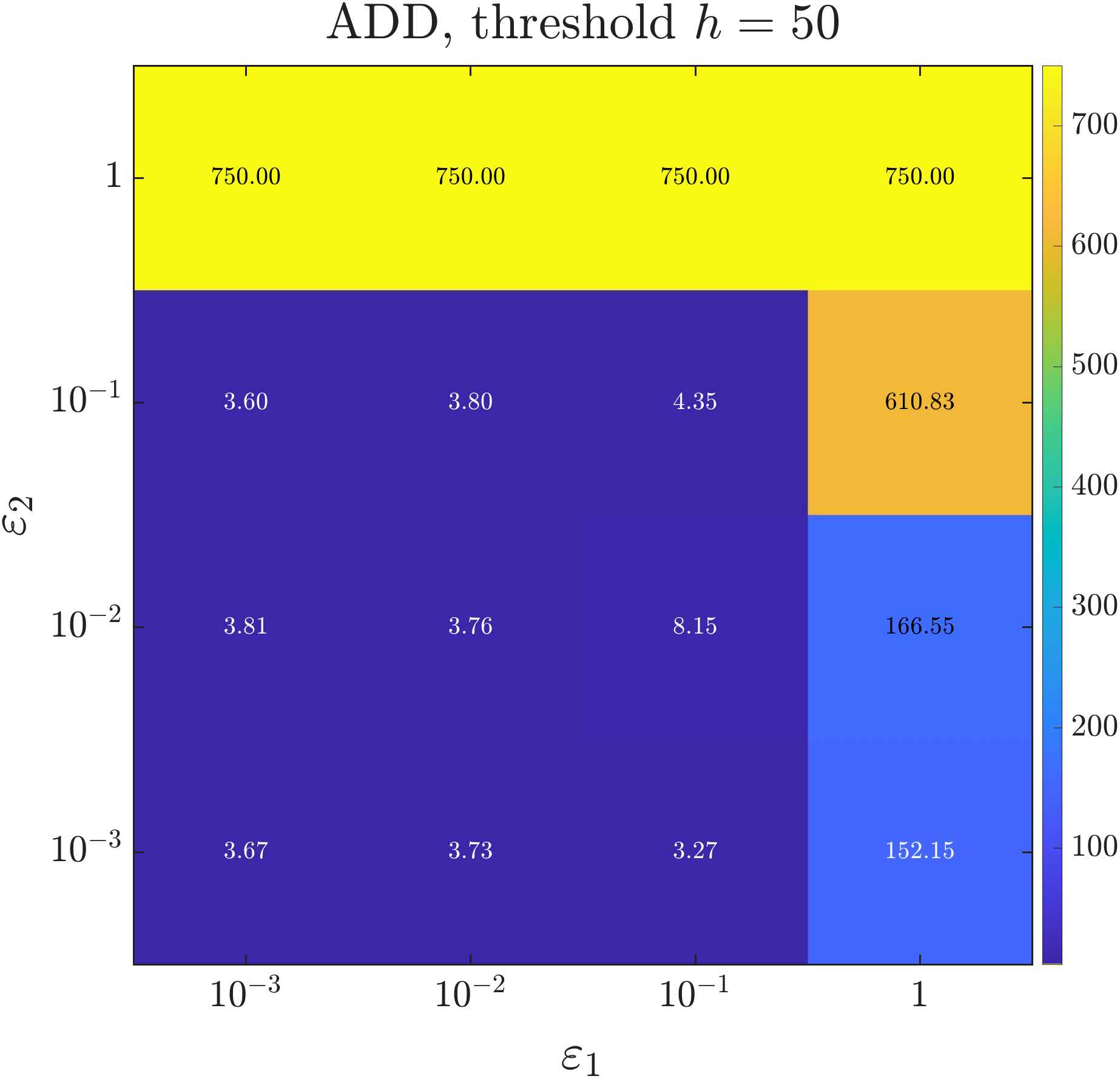}
        \caption{}
        \label{subfig:h_50_}
    \end{subfigure}

    \caption{Heatmaps of \(\FAR(h)\) and \(\ADD(h)\) for \(h \in \aset[]{5,10,50}\), plotted against different values of the radii \(\eps_1\) and \(\eps_2\) under non-Gaussian regime (for \(\sigma_{\text{a}}= 0.05\) and \(\lambda= 1.5\)).}
    \label{fig:sensitivity_radii_NG}
\end{figure}

\end{document}